\documentclass[12pt]{article}

%Packages
\usepackage{amsthm}
\usepackage{amsfonts}
\usepackage{amsmath}
\usepackage[round]{natbib}
\usepackage{graphicx}
\usepackage{vmargin, enumerate}
\usepackage{setspace}
%\usepackage[pagebackref=true]{hyperref}

% Natural Numbers, Integers, etc.
\newcommand{\R}{\mathbb{R}}
\newcommand{\Z}{\mathbb{Z}}
\newcommand{\N}{{\mathbb N}}

% Basic Probability
\newcommand{\E}[1]{{\mathbf E}\left\{#1\right\}}
\newcommand{\e}{{\mathbf E}}

\newcommand{\p}[1]{{\mathbf P}\left\{#1\right\}}

\newcommand{\psup}[2]{{\mathbf P}^{#1}\left\{#2\right\}}

\newcommand{\I}[1]{{\mathbf 1}_{[#1]}}
\newcommand{\set}[1]{\left\{ #1 \right\}}
\newcommand{\Cprob}[2]{\mathbf{P}\set{\left. #1 \; \right| \; #2}}
\newcommand{\probC}[2]{\mathbf{P}\set{#1 \; \left|  \; #2 \right. }}

% Some conditional probabilities

 %Equationarrays
\newcommand{\bea}{\begin{eqnarray}}
\newcommand{\eea}{\end{eqnarray}}
\newcommand{\eal}{\nonumber\\}

%Theorems and numbered things
\newtheorem{thm}{Theorem}
\newtheorem{lem}[thm]{Lemma}

\newtheorem{cor}[thm]{Corollary}

%Sets

\newcommand{\ssd}{\mathcal{D}}

\newcommand{\ssi}{\mathcal{I}}
\newcommand{\sk}{\mathcal{K}}

\newcommand{\ssf}{\mathcal{S}}
\newcommand{\st}{\mathcal{T}}
\newcommand{\xf}{\mathcal{X}}

\newcommand{\zf}{\mathcal{Z}}

\newcommand{\Bin}{\mbox{\upshape Bin}}

 %delaunay stabbing constant

\newcommand{\eventn}[1]{\mbox{\textsl{#1}}}

{ \end{list} }

\pdfpageheight 11in
\pdfpagewidth 8.5in
\textwidth = 6.5 in
\textheight = 9 in
\oddsidemargin = 1.0 in
\evensidemargin = 1.0 in
\topmargin = 1.0 in
\headheight = 0.0 in
\headsep = 0.0 in
\parskip = 0.2in
\parindent = 0.0in

\begin{document}

\title{Ballot theorems for random walks with finite variance}
\author{L.~Addario-Berry, B.A.~Reed}
\maketitle

\begin{abstract}
We prove an analogue of the classical ballot theorem that holds for any mean zero random walk with positive but finite variance. Our result is best possible: we exhibit examples demonstrating 
that if any of our hypotheses are removed, our conclusions may no longer hold. 
\end{abstract} 

\section{Introduction}\label{sec:introduction}
The classical ballot theorem, proved by \citet{bertrand1887solution}, states that in an election 
where one candidate receives $p$ votes and the other receives $q < p$ votes, the probability 
that the winning candidate is in the lead throughout the counting of the ballots is precisely 
\[
\frac{p-q}{p+q},
\]
assuming no one order for counting the ballots is more likely than another. Viewed as a statement about 
random walks, Bertrand's ballot theorem states that given a symmetric simple random walk $S$ and integers 
$n,k$ with $0 < k \leq n$ and with $n$ and $k$ of the same parity, 
        \[\p{S_i > 0~\forall~0<i<n\vert S_n=k} = \frac{k}{n}.\]
The standard approach to extending Bertrand's ballot theorem is most 
easily explained by first transforming the statement, letting $S_i'=i-S_i$ for $i=1,2,\ldots,n$. 
$S'$ is an {\em increasing} random walk, and the classical ballot theorem states that 
\[\p{S_i' < i~\forall~0<i<n~\vert~S_n'=n-k} = \frac{k}{n}.\]
One may then ask: for what other increasing stochastic processes does the same result hold? This question has 
been well-studied; much of the seminal work on the subject was done by \cite{takacs62ballot, 
takacs62generalization, takacs62time, takacs63distribution, takacs64combinatorial, takacs64fluctuations, takacs67comb}. 
The most general result to date is due to \cite{kallenberg99ballot} 
(see also \cite[][Chapter 11]{kallenberg03foundations}). 

If, rather than transforming $S$ into an increasing stochastic process, on takes the fact that $S_n/\sqrt{n}$ 
converges in distribution to a normal random variable as a starting point, a different generalization 
of Bertrand's ballot theorem emerges. It turns out that if $\e{X}=0$ and $0 < \E{X^2} < \infty$, then 
for a simple random walk $S$ with step size $X$, 
 \begin{equation}\label{eq:intuit_ballot}
 	\p{S_i > 0~\forall~1 \leq i \leq n \vert S_n=k}=\Theta\left(\frac k n\right)\mbox{ for all }k \mbox{ with } 0<k=O(\sqrt{n}).
\end{equation}
(This is a slight misrepresentation; when we consider random variables that do not live on a lattice, 
the right conditioning will in fact be on an event such as $\{k \leq S_n < k+1\}$ or something similar. For the moment, 
we ignore this technicality, presuming for the remainder of the introduction 
that $X$ is integer-valued and $\p{X=1}>0$, say.)

%The intuition behind this result is rather clear. The conjunction of $S_n=k$ and $S_i > 0$ for all $0 < i < n$ is essentially 
%equivalent to the occurrence of the following three events:
%\begin{itemize}
%	\item the first $n/4$ steps of the random walk stay above the origin; 
%	\item the first $n/4$ steps of the {\em negative reversed} random walk $S^r$, given by $S^r_i = -\sum_{j=1}^iX_{n-j}$, stay above $-k$; and
%	\item the middle $n/2$ steps of the random walk ``line up'' so that the sum of all three parts equals $k$. 
%\end{itemize}
%It is well known that the first event above has probability of order $n^{-1/2}$; see, for example, \cite{feller68intro1}, Theorem X11.7.1a. 
%Using bounds on the first event, it is not too hard to prove that the second event has probability of order $kn^{-1/2}$ for $k=O(\sqrt{n}). 
%An estimate of this sort is derived in (REF). Both of these bounds are intuitively clear by analogy with the classical gambler's ruin estimates. 
%Finally, a local central limit theorem tells us that the third event occurs with probability of order n^{-1/2}. Combining these three 
%estimates gives us the bound we seek. The details are a little more involved, but conceptually our proof is based on these three arguments

Furthermore, and (to the authors) more surprising, it turns out that this result is essentially best possible. 
We provide examples which demonstrate that if either $\E{X^2}=\infty$ or $k \neq O(\sqrt{n})$, no equation such as (\ref{eq:intuit_ballot}) can be expected to hold. The 
philosophy behind these examples can be explained yet another perspective on ballot-style results.
One may ask: what are sufficient conditions on the structure of a multiset $\ssf$ of $n$ numbers summing to some 
value $k$ to ensure that, in a uniformly random permutation of the set, 
all partial sums are positive with probability of order $k/n$? 
\label{page:multiset}
This latter perspective is philosophically 
closely tied to work of \cite{andersen53fluctuations,andersen54fluctuations}, \cite{spitzer56combinatorial} and others on the amount 
of time spent above zero by a conditioned random walk and on related questions. 
Our procedure for constructing the examples showing that our result is best possible is:
\begin{enumerate}
	\item construct a multiset $\mathcal{M}$ (whose elements sum to $k$, say) and for for which the bounds predicted by the ballot theorem fail;
	\item find a random variable $X$ and associated random walk $S$ for which, given that $S_n=k$, the random multiset $\{X_1,\ldots,X_n\}$ is 
		likely close to $\mathcal{M}$ in composition.
\end{enumerate}
The details involved in carrying out this procedure end up being somewhat involved. 

\subsection*{Outline}
In Section \ref{sec:general} we prove our positive result, the formalization of (\ref{eq:intuit_ballot}). 
Section \ref{sec:counterexamples} contains the examples which show that 
(\ref{eq:intuit_ballot}) may fail to hold if either $\E{X^2}=\infty$ or if $k \neq O(\sqrt{n})$. 
Finally, in Section \ref{sec:conclusion} we briefly discuss 
directions in which the current work might be extended and related potential avenues of research. 

\section{A general ballot theorem}\label{sec:general}
Throughout this section, $X$ is a mean zero random variable with $0 < \E{X^2} < \infty$ and $S$ is a simple random walk with step size $X$. 
Before stating our ballot theorem, we introduce a small amount of terminology. 
We say $X$ is a {\em lattice random variable} with period $d>0$ 
if there is a constant $z$ such that $dX-z$ is an integer random variable and $d$ is the smallest positive real number 
for which this holds; in this case, we say that the set $\mathbb{L}_X=\{(n+z)/d:n \in \Z\}$ is {\em the lattice of} $X$. 
A real number $A$ is {\em acceptable} for $X$ if $A \geq 1/d$ (if $X$ is a lattice random variable), or $A > 0$ (otherwise). 
	\begin{thm}\label{thm:gbt_new}
	For any $A$ which is acceptable for $X$, there is $C > 0$ depending only on $X$ and $A$ such that 
	for all $n$, for all $k>0$,
	\[\p{k \leq S_n < k+A,S_i >0 ~\forall~ 0 < i < n} \leq \frac{C\max\{k,1\}}{n^{1/2}},\]
	and for all $k$ with  with $0 < k \leq \sqrt{n}$, 
	\[\p{k \leq S_n < k+A,S_i >0 ~\forall~ 0 < i < n} \geq \frac{\max\{k,1\}}{Cn^{1/2}},\]
\end{thm}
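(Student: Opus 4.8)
The plan is to prove the upper and lower bounds by a combination of a ``mid-way'' decomposition and careful application of the local central limit theorem (LCLT). Throughout, think of the walk in two halves: from time $0$ to time $\lfloor n/2 \rfloor$, and from time $\lfloor n/2 \rfloor$ to $n$; by reversibility the second half, read backwards, is a copy of the same walk started from height roughly $k$ and conditioned to stay positive. The key auxiliary quantity is $p_n(k) = \p{S_i > 0 \ \forall\ 0 < i < n,\ S_n \in [k,k+A)}$ itself, and the engine of the proof is the estimate that, \emph{summed over} landing heights, $\sum_{k} p_n(k)$ is of order $n^{-1/2} \cdot (\text{typical height}) = \Theta(1)$ when we also know the walk stays positive — i.e. the probability of staying positive for $n$ steps is $\Theta(n^{-1/2})$ — combined with the fact that, conditioned on staying positive, the endpoint $S_n$ is spread out on scale $\sqrt n$ with a density that behaves like a Rayleigh distribution (linear near $0$).

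For the upper bound I would argue as follows. First, by the LCLT applied to the second half of the walk, for any starting configuration the probability that $S_n \in [k,k+A)$ given the position at time $\lfloor n/2\rfloor$ is $O(n^{-1/2})$ uniformly (after accounting for the lattice span via the acceptability of $A$). This already gives an $O(n^{-1/2})$ bound but loses the factor $k$. To recover the linear-in-$k$ gain one uses a reflection/comparison argument: conditioned on staying positive through time $n$ and on the trajectory up to time $n/2$, the walk from $n/2$ to $n$ must stay positive and land near $k$; reversing time, this is the probability that a walk from near height $k$ stays positive for $n/2$ steps, which standard fluctuation theory (or the reflection-type bounds underlying the classical ballot theorem, extended to finite-variance walks via the LCLT) bounds by $O(k/\sqrt n)$ — here one must be careful when $k$ is small, which is why the statement has $\max\{k,1\}$. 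I would make this rigorous by conditioning on the value $S_{m}$ where $m = \lfloor n/2 \rfloor$, splitting into the event $\{S_m \ge \sqrt n\}$ and $\{0 < S_m < \sqrt n\}$, using the $O(n^{-1/2})$ endpoint bound on the first and the $O(k/n)$ ``survival from low height'' bound on the second, and summing.

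For the lower bound the strategy is the reverse: I want to exhibit a ``typical'' way for the event to occur and lower-bound its probability. Fix $m = \lfloor n/2 \rfloor$. I would lower-bound $\p{S_i > 0\ \forall\ 0 < i \le m,\ S_m \in [\sqrt n, 2\sqrt n)}$ by a constant times $n^{-1/2}\cdot\sqrt n \cdot n^{-1/2} = \Theta(n^{-1/2})$ — wait, more carefully: the probability of staying positive for $m$ steps is $\Theta(n^{-1/2})$, and conditioned on that, $S_m/\sqrt n$ has a limiting density bounded below on $[1,2]$, so $\p{S_i>0\ \forall\ 0<i\le m,\ S_m\in[\sqrt n,2\sqrt n)} = \Theta(n^{-1/2})$. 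Then, from any height $h \in [\sqrt n, 2\sqrt n)$ at time $m$, I need the walk over the remaining $n-m$ steps to stay positive and land in $[k,k+A)$. Reversing time, this is the probability that a walk started from $[k,k+A)$ stays positive for $n-m$ steps and ends near height $h \asymp \sqrt n$ — and \emph{this} is exactly a ballot-type quantity of order $(k/\sqrt n)\cdot n^{-1/2}\cdot(\text{width of target window for } h)$. Choosing the $h$-window of width $\Theta(\sqrt n)$ gives the net lower bound $\Theta(n^{-1/2})\cdot\Theta(k/\sqrt n)\cdot(\sqrt n)^{-1}\cdot\sqrt n$; I will need to track these factors so they multiply to $\Omega(\max\{k,1\}/\sqrt n)$. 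The technically delicate point, and what I expect to be the main obstacle, is establishing the ``survival from a low starting height $k$'' estimate — that a finite-variance mean-zero walk started at height $k$ stays positive for $t$ steps with probability $\Theta(k/\sqrt t)$ (uniformly for $1 \le k \le \sqrt t$) and, moreover, does so while landing in a prescribed $\Theta(\sqrt t)$-window with the right density. For simple random walk this is classical reflection; for general finite-variance walks it requires combining the LCLT with a Tauberian/renewal argument controlling the first entry into the negative half-line (the ladder-epoch structure), and handling the lattice versus non-lattice cases uniformly — this is where the hypotheses $\e X = 0$, $\E{X^2} < \infty$, and the acceptability of $A$ all get used, and where the bulk of the careful work will lie.
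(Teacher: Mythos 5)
Your overall strategy---decompose the walk, apply time reversal to the final segment, combine a local CLT with a ``survival from a given starting height'' estimate---is the same one the paper uses, but several of the places you gloss over are where the real content lies, and one of your accountings of the powers of $n$ does not come out right.

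The most important difference: you identify ``the survival from a low starting height $k$'' estimate, i.e.\ $\p{\min_{i\le t} S_i > -h} = \Theta(\max\{h,1\}/\sqrt t)$, as the technically delicate point where ``the bulk of the careful work will lie,'' and you propose to derive it from ladder-epoch/renewal structure. The paper does not prove this at all: it simply invokes Lemma 3.3 of Pemantle (Lemma~\ref{lem:pem_per} here), which gives exactly $\p{T_h \geq n} \asymp \max\{h,1\}/\sqrt n$ along with the conditional second-moment bound $\E{S_n^2 \mid T_h > n} = O(n)$ that is needed later. Once this is available off the shelf, the upper bound collapses to a three-line computation. So your instinct about where the hard analysis sits is correct, but the paper sidesteps it by citation, and you should be aware that reproving it is both unnecessary and would dominate the length of the argument.

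A second structural difference: you split the walk into two halves at $m = \lfloor n/2 \rfloor$ and condition on $S_m$. The paper splits into \emph{three} pieces, at roughly $n/4$ and $3n/4$ (and at $\alpha n$, $(1-\alpha)n$ for the lower bound). The point of three pieces is that ``the walk stays above $0$ on $[0,n/4]$'' (a survival-from-height-$0$ event), ``the reversed walk stays above $-k$ on $[0,n/4]$'' (survival from height $\approx k$), and the value of the increment over the middle piece are genuinely independent. With a two-piece split you have to integrate over the conditioning on $S_m$ as you suggest, which is doable but messier; more importantly, for the lower bound you must show the middle piece stays positive while hitting a prescribed window, and the paper does this with Kolmogorov's maximal inequality (to bound the probability the middle piece dips too low) combined with Stone's LCLT, using the FKG inequality to establish that conditioning on survival only helps push $S_{\alpha n}$ up. None of this appears in your sketch; you describe the lower bound at the level of ``the density is bounded below,'' which conceals real work.

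Finally, your power-of-$n$ accounting in the lower bound---$\Theta(n^{-1/2})\cdot\Theta(k/\sqrt n)\cdot(\sqrt n)^{-1}\cdot\sqrt n$---multiplies out to $\Theta(k/n)$, which matches neither the $\max\{k,1\}/\sqrt n$ you say you are aiming for nor the $\max\{k,1\}/n^{3/2}$ that the paper's argument actually delivers (and which is what the ballot heuristic $\p{\text{positive}\mid S_n \approx k}\approx k/n$ times $\p{S_n\approx k}\approx n^{-1/2}$ predicts). You should fix this before attempting to make the argument rigorous, since an incorrect target makes it impossible to tell whether the pieces are being combined correctly. Also, for the upper bound you will need something like Kesten's anti-concentration bound $\sup_x \p{x \le S_m \le x+1} = O(m^{-1/2})$ (Theorem~\ref{thm:spread} in the paper) for the middle piece, which holds without conditioning; your phrase ``$O(n^{-1/2})$ bound but loses the factor $k$'' suggests you have not yet worked out how the three factors of $n^{-1/2}$, $n^{-1/2}$, and $\max\{k,1\}/\sqrt n$ actually combine.
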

Before proving Theorem \ref{thm:gbt_new}, we collect a handful of results which we will use in the course of the proof. 
First, we will use a rather straightforward result on how ``spread out'' 
sums of independent identically distributed random variables become. The version we present is a simplification of Theorem 1 in  \cite{kesten72spread}:
\begin{thm}\label{thm:spread}
        For any family of independent identically distributed real random variables $X_1,X_2,\ldots$ 
        with positive, possibly infinite variance and associated partial sums $S_1,S_2,\ldots,$ 
        there is $c>0$ depending only on the distribution of $X_1$ such that for all $n$, 
                \[\sup_{x \in \R} \p{x \leq S_n \leq x+1 } \leq c/\sqrt{n}.\]
\end{thm}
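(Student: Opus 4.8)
The plan is to reduce, by an elementary covering argument, to a window size that the summands do not saturate, and then to finish with a standard concentration-function estimate. Write $Q(Y;\lambda):=\sup_{x\in\R}\p{x\le Y\le x+\lambda}$ for the L\'evy concentration function; the bound to be proved is $Q(S_n;1)\le c/\sqrt n$. Since $X_1$ has positive variance it is not almost surely constant, so $\lim_{\lambda\downarrow 0}Q(X_1;\lambda)=\sup_x\p{X_1=x}<1$, and I would fix once and for all a $\lambda_0>0$ with $q:=Q(X_1;\lambda_0)<1$. Covering any interval $[x,x+1]$ by $\lceil 1/\lambda_0\rceil$ intervals of length $\lambda_0$ yields $Q(S_n;1)\le\lceil 1/\lambda_0\rceil\,Q(S_n;\lambda_0)$, so it is enough to bound $Q(S_n;\lambda_0)$ by $c_0/\sqrt n$.

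The quickest finish is to invoke the Kolmogorov--Rogozin inequality: for independent $X_1,\dots,X_n$ and any $\lambda>0$, $Q(S_n;\lambda)\le C\lambda\bigl(\sum_{i=1}^n(1-Q(X_i;\lambda))\bigr)^{-1/2}$ with $C$ an absolute constant. In the i.i.d.\ case with $\lambda=\lambda_0$ this reads $Q(S_n;\lambda_0)\le C\lambda_0/\sqrt{n(1-q)}=:c_0/\sqrt n$, and then $c=\lceil 1/\lambda_0\rceil c_0$ depends only on the law of $X_1$, as required.

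If one wants an argument that does not quote Kolmogorov--Rogozin, I would replace the last step by Esseen's bound $Q(S_n;\lambda_0)\le C\lambda_0\int_{-1/\lambda_0}^{1/\lambda_0}|\phi(t)|^n\,dt$, where $\phi(t)=\E{e^{itX_1}}$, and estimate $|\phi|$ on the window in two ranges. Near the origin, symmetrizing with an independent copy $X_1'$ and setting $\tilde X:=X_1-X_1'$, one has $1-|\phi(t)|^2=\E{2\sin^2(t\tilde X/2)}\ge\tfrac{2}{\pi^2}t^2\,\E{\tilde X^2\,\I{|t\tilde X|\le\pi}}$; picking $M$ with $c_1:=\E{\tilde X^2\,\I{|\tilde X|\le M}}>0$ (possible because $\tilde X$ is nondegenerate) gives $|\phi(t)|\le e^{-c_2t^2}$ for $|t|\le\pi/M$, for some $c_2>0$. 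Away from the origin, $|\phi|$ attains a maximum $\rho<1$ on the compact set $\{\pi/M\le|t|\le 1/\lambda_0\}$ --- in the non-lattice case because $|\phi|<1$ off the origin, and in the lattice case because one takes $\lambda_0$ strictly between $h/(2\pi)$ and the span $h$, so that the revivals of $|\phi|$ at the multiples of $2\pi/h$ stay out of the window. Then $\int_{-1/\lambda_0}^{1/\lambda_0}|\phi(t)|^n\,dt\le\int_{|t|\le\pi/M}e^{-nc_2t^2}\,dt+\tfrac{2}{\lambda_0}\rho^n\le C_3/\sqrt n$, which gives the same conclusion.

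The single point that needs genuine, if minor, care is that the variance may be infinite, so the usual local central-limit expansion of $\phi$ near $0$ is unavailable; the substitute is the inequality $1-|\phi(t)|^2\ge c\,t^2\,\E{\tilde X^2\,\I{|\tilde X|\le\pi/|t|}}$ together with the observation that a truncated second moment of a nondegenerate variable exceeds a positive constant already at one fixed truncation level --- which is precisely why the spreading persists without any moment hypothesis beyond nondegeneracy. Everything else (the lattice bookkeeping, keeping the window of the right size, and the covering step) is routine, and this is why the version stated here is, unlike Kesten's full theorem, essentially immediate from off-the-shelf concentration-function technology.
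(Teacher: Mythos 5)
The paper gives no proof of this statement: it is quoted directly as ``a simplification of Theorem 1 in Kesten (1972)'', so there is no in-paper argument to compare yours against. That said, both of the routes you sketch are correct, and your observation that the weakened version stated here follows from off-the-shelf concentration-function technology alone---rather than requiring Kesten's sharper theorem, which involves a normalization by truncated second moments---is a genuinely useful clarification. The covering reduction from window size $1$ to window size $\lambda_0$ is fine, as is the choice of $\lambda_0$ with $q=Q(X_1;\lambda_0)<1$, which uses only nondegeneracy and hence covers the infinite-variance case. In the Esseen route, the symmetrization bound $1-|\phi(t)|^2\ge\frac{2}{\pi^2}t^2\,\E{\tilde X^2\I{|t\tilde X|\le \pi}}$, the choice of a fixed truncation level $M$ with $\E{\tilde X^2\I{|\tilde X|\le M}}>0$, and the lattice bookkeeping placing $\lambda_0$ strictly between $h/(2\pi)$ and the span $h$ are all correct and sufficient. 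One small slip worth flagging in the other route: the Kolmogorov--Rogozin inequality reads
\[
Q(S_n;\lambda)\le C\lambda\Bigl(\sum_{i=1}^n\lambda_i^2\bigl(1-Q(X_i;\lambda_i)\bigr)\Bigr)^{-1/2},\qquad \lambda\ge\max_i\lambda_i,
\]
whereas your version omits the $\lambda_i^2$ factors inside the sum and so is not scale invariant. Since you apply it with $\lambda=\lambda_i=\lambda_0$ all equal, this only shifts the constant (by a factor $\lambda_0$), and the conclusion $Q(S_n;\lambda_0)=O(n^{-1/2})$ with a constant depending only on the law of $X_1$ is unaffected.
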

We will also use the following lemma, Lemma 3.3 from \citep{pemantle95critical}:
\begin{lem}\label{lem:pem_per}
	For $h \geq 0$, let $T_h$ be the first time $t$ that $S_t < -h$. Then there are constants $c_1,c_2,c_3$ such that for all $n$:
	\begin{itemize}
		\item[(i)] for all $h$ with $0 \leq h \leq \sqrt{n}$, $\p{T_h \geq n} \geq c_1\cdot \max\{h,1\}/\sqrt{n}$; 
		\item[(ii)] for all $h$ with $0 \leq h \leq \sqrt{n}$, $\E{S_n^2 \vert T_h > n} \leq c_2 n$; and 
		\item[(iii)] for all $h\geq 0$, $\p{T_h \geq n} \leq c_3 \cdot\max\{h,1\}/\sqrt{n}$.
	\end{itemize}
\end{lem}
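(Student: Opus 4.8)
The plan is to route everything through the strict descending ladder structure of $S$ and then appeal to two classical facts about mean-zero walks with $0<\E{X^2}<\infty$. Write $\sigma^2=\E{X^2}$, let $D_0=0$ and let $D_i=\inf\{t>D_{i-1}:S_t<S_{D_{i-1}}\}$ be the successive strict descending ladder epochs, with ladder heights $Z_i=S_{D_{i-1}}-S_{D_i}>0$, so that the pairs $(W_i,Z_i)$ with $W_i:=D_i-D_{i-1}$ are i.i.d., each $W_i$ is distributed as $T_0=\inf\{t:S_t<0\}$, and $\min_{0\le t\le m}S_t=-(Z_1+\cdots+Z_{\nu_m})$ where $\nu_m=\#\{i\ge1:D_i\le m\}$. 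Hence $\{T_h\ge n\}=\{Z_1+\cdots+Z_{\nu_{n-1}}\le h\}$ and, with $\nu(h):=\inf\{i\ge1:Z_1+\cdots+Z_i>h\}$, $T_h=D_{\nu(h)}=\sum_{i=1}^{\nu(h)}W_i$: staying above $-h$ for $n$ steps means that $\nu(h)$ excursion lengths, each distributed as $T_0$, do not fit into $n-1$ steps. The two inputs are: (a) the base case $\p{T_0\ge n}\asymp n^{-1/2}$, which is the $h=O(1)$ instance of the lemma and is the classical Sparre Andersen--Spitzer persistence estimate for such walks (consistent with Theorem \ref{thm:spread}); and (b) $\E{Z_1}<\infty$, the standard fact that finite variance forces a finite-mean descending ladder height. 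From (b) and the elementary renewal theorem, $\E{\nu(h)}=\sum_{i\ge1}\p{\nu(h)\ge i}\asymp\max\{h,1\}$, with $\p{\nu(h)>Am\max\{h,1\}}$ geometrically small in $A$. Parts (i) and (iii) together are then a quantitative, non-asymptotic form of the classical estimate $\p{\min_{t<n}S_t\ge-h}\asymp\max\{h,1\}/\sqrt n$ for $h\le\sqrt n$.

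For part (iii) the apparent obstacle is that $W_i$ and $Z_i$ are dependent; this is removed by noting that $\nu(h)$ is a stopping time for $(Z_i)$ and that $\{\nu(h)\ge i\}$ is independent of the $i$-th pair $(W_i,Z_i)$, so Wald's identity gives $\E{\sum_{i\le\nu(h)}\psi(W_i,Z_i)}=\E{\psi(W_1,Z_1)}\,\E{\nu(h)}$ for any $\psi\ge0$. Applying this to the (independence-free) truncation bound $\{\sum_{i\le m}W_i\ge n\}\subseteq\{\max_iW_i\ge n/2\}\cup\{\sum_iW_i\I{W_i<n/2}\ge n/2\}$ together with the union bound and Markov's inequality, one gets $\p{T_h\ge n}\le\E{\nu(h)}\bigl(\p{T_0\ge n/2}+(2/n)\E{T_0\wedge(n/2)}\bigr)$; since $\p{T_0\ge n/2}\le c\,n^{-1/2}$ by (a) and $\E{T_0\wedge(n/2)}=\sum_{s<n/2}\p{T_0>s}\le c\sqrt n$, this is at most $C\,\E{\nu(h)}\,n^{-1/2}\le C'\max\{h,1\}\,n^{-1/2}$. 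For part (i) with $0<h\le\sqrt n$ one bounds $\{T_h\ge n\}$ from below by the disjoint events $\{D_{i-1}\le n-1,\ Z_1+\cdots+Z_{i-1}\le h,\ W_i\ge n\}$, $i\ge1$; since $W_i$ is independent of the first $i-1$ pairs, summing gives $\p{T_h\ge n}\ge\p{T_0\ge n}\,\E{\min\{\nu_{n-1}+1,\nu(h)\}}$, and, choosing $m_0$ a small enough multiple of $\max\{h,1\}$ and applying Markov's inequality to $\E{Z_1+\cdots+Z_{m_0-1}}=(m_0-1)\E{Z_1}$ and to $\E{(W_1+\cdots+W_{m_0-1})\wedge n}\le(m_0-1)\E{T_0\wedge n}$, one makes $\p{\nu(h)\ge m_0}$ and $\p{\nu_{n-1}\ge m_0-1}$ each at least $3/4$, so $\E{\min\{\nu_{n-1}+1,\nu(h)\}}\ge m_0/2\ge c\max\{h,1\}$ (after, if necessary, restricting to $h$ below a small fixed multiple of $\sqrt n$ and covering the remaining range, where the assertion merely says an order-$\sqrt n$--long walk avoids $-h$ with probability bounded below, by the invariance principle; the finitely many small $n$ are handled by (a)). With $\p{T_0\ge n}\ge c\,n^{-1/2}$ this yields (i).

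For part (ii) it suffices, since $S_n\ge-h$ and $h^2\le n$ on $\{T_h>n\}$, to bound $\E{(S_n^+)^2\I{T_h>n}}=\int_0^\infty 2x\,\p{S_n>x,T_h>n}\,dx$ by $C\,n\,\p{T_h>n}$; the contribution of $x\le\lambda\sqrt n$ is at most $\lambda^2n\,\p{T_h>n}$, so the task is to bound $\int_{\lambda\sqrt n}^\infty 2x\,\p{S_n>x,T_h>n}\,dx$ by $C\,n\,\p{T_h>n}$ for a suitable fixed $\lambda$. Split $\p{S_n>x,T_h>n}$ by the size of the largest step. If some step exceeds $x/2$, conditioning on the first such step, at time $j$ (independent of $\{T_h>j-1\}\in\mathcal F_{j-1}$), and applying part (iii) to bound $\p{T_h>j}$ gives $\p{S_n>x,T_h>n,\ \max_jX_j>x/2}\le\p{X>x/2}\sum_{j<n}\p{T_h>j}\le C\max\{h,1\}\sqrt n\,\p{X>x/2}$; since $\int_{\lambda\sqrt n}^\infty 2x\,\p{X>x/2}\,dx\le4\,\E{(X^+)^2}<\infty$, this part contributes at most $C\sigma^2\max\{h,1\}\sqrt n$, which by part (i) is $O(n\,\p{T_h>n})$. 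On the complementary event, reaching level $x\gg\sqrt n$ with every step at most $x/2$ forces either $k\ge2$ steps each of size a fixed multiple of $x$ — which, by the same conditioning argument applied to the first such steps together with $\p{X>cx}\le\sigma^2/(c^2x^2)$, has probability intersected with $\{T_h>n\}$ at most $C\max\{h,1\}n^{3/2}x^{-4}$, integrating against $2x\,dx$ to $O(\lambda^{-2}\max\{h,1\}\sqrt n)=O(n\,\p{T_h>n})$ — or a ``ballistic'' crossing in which no step exceeds a small fixed multiple of $x$, for which one needs $\p{S_n>x,T_h>n,\ \text{no such step}}\le C\,\p{T_h>n}\,e^{-cx^2/n}$, integrating to $O(n\,\p{T_h>n})$. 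Establishing this last bound — equivalently, that once the big-jump contributions are removed the walk conditioned to stay above $-h$ carries only Gaussian-order mass beyond scale $\sqrt n$ — is the main obstacle; I would obtain it via a first-passage decomposition to level $\sqrt n$ combined with a gambler's-ruin estimate comparing the cost of climbing to a high level while avoiding $-h$ with $\p{T_h>n}$, or via a Fuk--Nagaev-type moderate-deviation bound applied to the walk truncated at a fixed multiple of $x$.
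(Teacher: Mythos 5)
The paper itself offers no proof of this lemma: it is imported verbatim as Lemma~3.3 of \citep{pemantle95critical}, with only the remark that extending (ii) from $h=0$ to general $h\le\sqrt n$ requires ``an essentially identical proof.'' So your write-up is not competing against an argument in the text but attempting a self-contained one. Your treatment of (i) and (iii) via the strict descending ladder decomposition is correct and clean, granting the two classical inputs you name ($\p{T_0\ge n}\asymp n^{-1/2}$ and $\E{Z_1}<\infty$): the identity $T_h=\sum_{i\le\nu(h)}W_i$, the Wald step (using that $\{\nu(h)\ge i\}$ depends only on $Z_1,\dots,Z_{i-1}$), the truncation of the $W_i$ at $n/2$ for the upper bound, and the disjoint events $\{D_{i-1}\le n-1,\ \nu(h)\ge i,\ W_i\ge n\}$ for the lower bound are all sound, and the restriction $h\le\sqrt n$ enters exactly where it should, namely in making $\p{\nu_{n-1}\ge m_0-1}\ge 3/4$ for $m_0$ of order $\max\{h,1\}$.

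Part (ii), however, contains a genuine gap, which you yourself flag: the bound $\p{S_n>x,\,T_h>n,\,\mbox{no step exceeds }\eta x}\le C\,\p{T_h>n}e^{-cx^2/n}$ is asserted, not proved, and it does not follow from the tools you name without substantial further work. There are two distinct difficulties. First, for the walk with steps truncated at $\eta x$, Bernstein or Fuk--Nagaev gives a tail of order $\exp\{-c\min(x^2/n,\,1/\eta)\}$ plus polynomial corrections; for $x\gg\sqrt{n/\eta}$ this is merely a constant, not $e^{-cx^2/n}$, so convergence of $\int 2x\,dx$ against it is not automatic and a more careful ($x$-dependent, multi-scale) truncation would be needed. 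Second, even granting a good unconditional tail estimate, the multiplicative factor $\p{T_h>n}$ requires decoupling $\{T_h>n\}$ from the moderate-deviation event; these events live on the same time interval and no such decoupling is sketched. Neither of your two proposed routes is carried far enough to see that it closes.

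The frustrating part is that (ii) is the easiest claim once (i) and (iii) are available. Writing $\sigma^2=\E{X^2}$, the process $M_t=S_t^2-\sigma^2 t$ is a martingale and $n\wedge T_h$ is a bounded stopping time, so $\E{S_{n\wedge T_h}^2}=\sigma^2\E{n\wedge T_h}$ and hence
\[
\E{S_n^2\I{T_h>n}}\ \le\ \E{S_{n\wedge T_h}^2}\ =\ \sigma^2\sum_{s=0}^{n-1}\p{T_h>s}\ \le\ \sigma^2\Big(1+c_3\max\{h,1\}\sum_{s=1}^{n-1}s^{-1/2}\Big)\ \le\ C\max\{h,1\}\sqrt n
\]
by (iii); dividing by $\p{T_h>n}=\p{T_h\ge n+1}\ge c_1\max\{h,1\}/\sqrt{n+1}$ from (i) --- valid precisely in the range $h\le\sqrt n$ --- yields $\E{S_n^2\mid T_h>n}\le c_2n$. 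I would replace your entire argument for (ii) with this and keep your (i) and (iii) as written.
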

(In \citep{pemantle95critical}, (ii) was only proved with $h=0$, but an essentially identical proof yields the above formulation.) 
We will use the following easy corollary of Lemma \ref{lem:pem_per} in proving the lower bound of Theorem \ref{thm:gbt_new}. 
\begin{cor}\label{cor:pem_per}
There exists $\epsilon > 0$ such that for all $n$ and all $h$ with $0 \leq h \leq \sqrt{n}$, 
\[\E{S_n^2~\vert~T_h > n, S_n \geq \epsilon \sqrt{n}} \leq 3c_2 n.\]
\end{cor}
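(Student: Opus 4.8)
The plan is to reduce the corollary to a uniform lower bound on $\p{S_n \ge \epsilon\sqrt{n} \vert T_h > n}$ and to extract that bound from Lemma~\ref{lem:pem_per}. Writing $A = \{T_h > n\}$ and $D = \{S_n \ge \epsilon\sqrt{n}\}$ we have
\[
\E{S_n^2 \vert A \cap D} = \frac{\E{S_n^2\,\I{D} \vert A}}{\p{D \vert A}} \le \frac{\E{S_n^2 \vert A}}{\p{D \vert A}} \le \frac{c_2 n}{\p{D \vert A}}
\]
by Lemma~\ref{lem:pem_per}(ii), so it suffices to choose $\epsilon>0$ (depending only on the law of $X$) so that $\p{S_n \ge \epsilon\sqrt{n} \vert T_h > n} \ge 1/3$ for all $n$ and all $0 \le h \le \sqrt{n}$. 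For the few small values of $n$ where the conditioning event carries negligible mass the asserted inequality is vacuous, or holds simply because $c_2$ may be taken as large as one wishes; I will not dwell on this.

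To obtain the probability bound I would restart the walk at time $n$: by the Markov property $\p{T_h > 2n} = \E{\I{T_h > n}\, \p{T'_{h+S_n} > n}}$, where $T'_b$ denotes the first-passage time below $-b$ of an independent copy of the walk. On $\{T_h > n\}$ one has $h+S_n \ge 0$, so Lemma~\ref{lem:pem_per}(iii) bounds $\p{T'_{h+S_n}>n}$ by $c_3\max\{h+S_n,1\}/\sqrt{n}$, which on $\{S_n < \epsilon\sqrt{n}\}$ is at most $c_3(h+\epsilon\sqrt{n}+1)/\sqrt{n}$. Splitting the expectation according to whether $S_n \ge \epsilon\sqrt{n}$, bounding the complementary contribution by $1$, and then applying Lemma~\ref{lem:pem_per}(i) at time $2n$ (legitimate since $h \le \sqrt{n} \le \sqrt{2n}$) and Lemma~\ref{lem:pem_per}(iii) to $\p{T_h>n}$, one is led to
\[
\p{S_n \ge \epsilon\sqrt{n},\ T_h > n} \ge \p{T_h > 2n} - \frac{c_3(h+\epsilon\sqrt{n}+1)}{\sqrt{n}}\,\p{T_h > n}.
\]
When $h$ is $O(\sqrt n)$ with a small implied constant the subtracted term is $O(\epsilon)\cdot\p{T_h>n}$, and since $\p{T_h>2n} \ge c_1\max\{h,1\}/\sqrt{2n}$ while $\p{T_h>n} \le c_3\max\{h,1\}/\sqrt{n}$, this already gives $\p{S_n\ge\epsilon\sqrt{n}\vert T_h>n} \ge \delta_0$ for some fixed $\delta_0 = \delta_0(X) > 0$, once $\epsilon$ is small.

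The hard part — and the reason the factor is exactly $3$ — is improving $\delta_0$ to $1/3$ and covering $h$ comparable to $\sqrt{n}$, where the barrier $-h$ sits so far below $0$ that a single restart does not separate small from large values of $S_n$. For such $h$ the conditioning $\{T_h>n\}$ is mild ($\p{T_h>n}$ is bounded below), and I would invoke Donsker's theorem: along any subsequence with $h_n/\sqrt{n}\to a\in[0,1]$, $\p{S_n\ge\epsilon\sqrt n\vert T_h>n}$ converges to $\p{\sigma B_1 \ge \epsilon \vert \min_{t\in[0,1]}\sigma B_t \ge -a}$, and a one-line reflection computation shows this limit exceeds $1/2 > 1/3$ once $\epsilon/\sigma$ is small, uniformly in $a\in[0,1]$; tightness of the conditional law of $S_n/\sqrt n$ — itself a consequence of Lemma~\ref{lem:pem_per}(ii) via Markov's inequality — makes the limit identification valid and the resulting bound uniform in $h$. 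One could instead avoid Donsker with a two-scale version of the restart above, optimizing the restart length and exploiting that $\p{T_h>(1+\lambda)n}/\p{T_h>n}\to 1$ as $\lambda\downarrow 0$. Either way, this uniform sharpening of the positive constant is the only non-routine ingredient; the conditional-expectation manipulation, the Markov decomposition, and the small-$n$ bookkeeping are all straightforward.
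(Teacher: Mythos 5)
Your reduction is exactly the paper's: write $\E{S_n^2\mid T_h>n,\,S_n\ge\epsilon\sqrt n}=\E{S_n^2\,\I{S_n\ge\epsilon\sqrt n}\mid T_h>n}/\probC{S_n\ge\epsilon\sqrt n}{T_h>n}$, drop the indicator in the numerator, apply Lemma~\ref{lem:pem_per}(ii), and reduce to showing $\probC{S_n\ge\epsilon\sqrt n}{T_h>n}\ge 1/3$ uniformly in $n$ and $h\le\sqrt n$. Where you diverge is in proving that probability bound, and here you miss the one-line argument the paper uses: both $\{T_h>n\}=\{\min_{i\le n}S_i\ge -h\}$ and $\{S_n\ge\epsilon\sqrt n\}$ are \emph{increasing} events in the i.i.d.\ coordinates $(X_1,\dots,X_n)$, so by the FKG (Harris) inequality they are positively correlated, giving $\probC{S_n\ge\epsilon\sqrt n}{T_h>n}\ge\p{S_n\ge\epsilon\sqrt n}$ for every $h\ge 0$ at once. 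One then just picks $\epsilon$ so that $\p{S_n\ge\epsilon\sqrt n}\ge 1/3$ for all $n$, which is the only place the choice of $\epsilon$ enters.

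Your alternative — the Markov restart at time $n$ combined with Lemma~\ref{lem:pem_per}(i) and (iii), patched with a Donsker/reflection argument in the regime $h\asymp\sqrt n$ — is a plausible route, but it is substantially heavier than needed and is not actually carried through: the Donsker step is asserted rather than proved (uniformity over all subsequential limits $a\in[0,1]$, and convergence of the \emph{conditional} laws, both need care), the ``two-scale restart'' is only gestured at, and your own single-restart computation, as you note, degenerates when $h$ is a fixed fraction of $\sqrt n$. You describe the uniform lower bound as ``the only non-routine ingredient,'' but with FKG it is routine; that monotonicity observation is the idea your proposal is missing.
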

\begin{proof}
Choose $\epsilon > 0$ such that for all $n$, $\p{S_n \geq \epsilon \sqrt{n}} > 1/3$. By the FKG inequality, $\probC{S_n \geq \epsilon \sqrt{n}}{T_0 > n} \geq 1/3$. 
We thus have 
\begin{eqnarray}
	\E{S_n^2~\vert~T_h > n, S_n \geq \epsilon \sqrt{n}} 	& = & \frac{\E{S_n^2 \I{S_n \geq \epsilon \sqrt{n}}~\vert~T_0 > n}}{\probC{S_n \geq \epsilon \sqrt{n}}{T_0 > n}} \nonumber \\
											& \leq & 3 \E{S_n^2 \I{S_n \geq \epsilon \sqrt{n}}~\vert~T_0 > n} \leq 3\E{S_n^2~\vert~ T_0 > n}, \nonumber
\end{eqnarray}
and Lemma \ref{lem:pem_per} (ii) completes the proof. 
\end{proof}
Finally, in proving the lower bound of Theorem \ref{thm:gbt_new}, we will use a local central limit theorem. The following 
is a weakening of Theorem 1 from \cite{stone65local}.
\begin{thm}[\cite{stone65local}]
\label{thm:stone}
Fix any $c > 0$. If $\e{X}=0$ and $0 < \E{X^2} < \infty$ then for any $h > 0$, 
if $X$ is non-lattice then for all $x$ with $|x| \leq c \sqrt{n}$, 
\[
\p{x \leq S_n < x+h} = (1+o(1))\frac{h\cdot e^{-x^2/(2n\E{X^2})}}{\sqrt{2\pi\E{X^2}n}},
\]
and if $X$ is lattice then for all $x\in \mathbb{L}_X$ with $|x| \leq c \sqrt{n}$, 
\[
\p{S_n = x} = (1+o(1))\frac{e^{-x^2/(2n\E{X^2})}}{\sqrt{2\pi\E{X^2}n}}.
\]
In both cases, $o(1) \rightarrow 0$ as $n \rightarrow \infty$ uniformly over all $x$ in the allowed range. 
\end{thm}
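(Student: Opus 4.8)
This is a local central limit theorem, so the plan is to argue through characteristic functions and Fourier inversion. Write $\sigma^2=\E{X^2}$ and $\phi(t)=\E{e^{itX}}$, so that $\phi(t)^n$ is the characteristic function of $S_n$ and, since $\e{X}=0$ and $\sigma^2<\infty$, $\phi(t)=1-\tfrac{\sigma^2t^2}{2}+o(t^2)$ as $t\to0$. On the range $|x|\le c\sqrt n$ the Gaussian density $e^{-x^2/(2\sigma^2n)}/\sqrt{2\pi\sigma^2n}$ lies between a fixed positive multiple of $1/\sqrt n$ and $1/\sqrt{2\pi\sigma^2n}$, so the multiplicative conclusion $(1+o(1))$ is equivalent to the additive statement that $\p{x\le S_n<x+h}-h\,e^{-x^2/(2\sigma^2n)}/\sqrt{2\pi\sigma^2n}=o(1/\sqrt n)$ uniformly in the allowed $x$ (and likewise for $\p{S_n=x}$ in the lattice case); I would establish this additive form. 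For the lattice case I would take $X$ integer-valued of span one for concreteness, the general lattice case following by the same argument after an affine change of variables.

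\textbf{Lattice case.} Fourier inversion on the circle gives $\p{S_n=x}=\tfrac1{2\pi}\int_{-\pi}^{\pi}e^{-itx}\phi(t)^n\,dt$. Writing $y=x/(\sigma\sqrt n)$ and substituting $t=s/(\sigma\sqrt n)$ turns $\sqrt{2\pi\sigma^2n}\,\p{S_n=x}$ into $\tfrac1{\sqrt{2\pi}}\int_{-\pi\sigma\sqrt n}^{\pi\sigma\sqrt n}e^{-isy}\phi(s/(\sigma\sqrt n))^n\,ds$, to be compared with $e^{-y^2/2}=\tfrac1{\sqrt{2\pi}}\int_\R e^{-isy}e^{-s^2/2}\,ds$. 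I would split this at $|s|=\delta\sigma\sqrt n$. On $|s|\le\delta\sigma\sqrt n$, taking $\delta$ small enough that $|\phi(u)|\le e^{-\sigma^2u^2/4}$ for $|u|\le\delta$, the integrand is dominated by $e^{-s^2/4}$ and, via the Taylor expansion of $\log\phi$, converges pointwise to $e^{-isy}e^{-s^2/2}$; dominated convergence then yields convergence to $e^{-y^2/2}$, uniformly in $y$ because the dominating function and the modulus of the error do not involve $y$. On $\delta\sigma\sqrt n<|s|\le\pi\sigma\sqrt n$ the lattice property gives $|\phi(t)|<1$ for $0<|t|\le\pi$, so $\rho:=\sup_{\delta\le|t|\le\pi}|\phi(t)|<1$ by compactness, and this part is at most $2\pi\sigma\sqrt n\,\rho^{\,n}\to0$; the discarded Gaussian tail $\int_{|s|>\pi\sigma\sqrt n}e^{-s^2/2}\,ds$ is likewise negligible.

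\textbf{Non-lattice case.} The same scheme applies, except that the indicator $g:=\I{u\in[0,h)}$ has Fourier transform decaying only like $1/t$, so it is not recovered by a convergent inversion integral; the fix is to smooth. For each $\epsilon>0$ I would pick nonnegative band-limited functions $g_\epsilon^{\pm}$ with $g_\epsilon^{-}\le g\le g_\epsilon^{+}$, $\int g_\epsilon^{\pm}=h\pm O(\epsilon)$, and $\widehat{g_\epsilon^{\pm}}$ supported in a bounded interval $[-T_\epsilon,T_\epsilon]$. Then $\E{g_\epsilon^{\pm}(S_n-x)}=\tfrac1{2\pi}\int_{-T_\epsilon}^{T_\epsilon}\widehat{g_\epsilon^{\pm}}(t)e^{-itx}\phi(t)^n\,dt$, and the identical rescaling and splitting go through: near zero we dominate and Taylor-expand; on $\delta\le|t|\le T_\epsilon$ we use $\sup_{\delta\le|t|\le T_\epsilon}|\phi(t)|<1$, which holds by compactness precisely because $X$ is non-lattice and because band-limiting has kept the frequency window $[-T_\epsilon,T_\epsilon]$ bounded. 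This gives $\sqrt{2\pi\sigma^2n}\,\E{g_\epsilon^{\pm}(S_n-x)}\to(h\pm O(\epsilon))e^{-x^2/(2\sigma^2n)}$ as $n\to\infty$, uniformly in $|x|\le c\sqrt n$, for each fixed $\epsilon$. The sandwich $\E{g_\epsilon^{-}(S_n-x)}\le\p{x\le S_n<x+h}\le\E{g_\epsilon^{+}(S_n-x)}$, multiplied by $\sqrt n$, then confines $\limsup_n$ and $\liminf_n$ of $\sqrt n\,\p{x\le S_n<x+h}$ to within $O(\epsilon)$ of the Gaussian value uniformly in $x$, and letting $\epsilon\to0$ finishes the proof.

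\textbf{Main obstacle.} The lattice case is routine once the substitution is set up. The genuine work is in the non-lattice case: carrying out the smoothing quantitatively enough that every error term is truly $o(1/\sqrt n)$ and, crucially, uniform over $|x|\le c\sqrt n$, and handling the medium-frequency range. For lattice $X$ the inversion integral runs over the bounded set $[-\pi,\pi]$, so compactness immediately supplies the contraction $\rho<1$; for non-lattice $X$ one knows only $|\phi(t)|<1$ for $t\ne0$, while $\sup_{|t|\ge\delta}|\phi(t)|$ may equal $1$, so the medium frequencies cannot be bounded directly — it is exactly the compact support of the mollifier's Fourier transform that restores a bounded frequency window on which compactness applies. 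Tracking the two limiting parameters ($n\to\infty$, then $\epsilon\to0$) while keeping all estimates uniform in $x$ is where the most care is needed.
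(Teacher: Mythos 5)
The paper offers no proof of this statement: it is imported, as an explicit weakening of Theorem~1 of \citet{stone65local}, so there is no internal argument to compare against. Your proposal is the classical characteristic-function proof of the local central limit theorem --- Fourier inversion, the rescaling $t=s/(\sigma\sqrt n)$, a small/medium frequency split with domination by $e^{-s^2/4}$ near the origin and a spectral gap $\rho<1$ away from it, and band-limited majorants/minorants to handle the non-lattice case --- which is essentially Stone's own route (and that of Feller, Vol.~II, Ch.~XV). The reduction of the multiplicative $(1+o(1))$ claim to an additive $o(1/\sqrt n)$ bound uniform in $x$, using that the Gaussian factor is bounded below on $|x|\le c\sqrt n$, is exactly the right way to obtain the stated uniformity, and your handling of the two limits ($n\to\infty$ first, then $\epsilon\to 0$) via $\limsup$/$\liminf$ is sound, as is your identification of the medium-frequency range as the place where non-latticeness and the compact spectral support of the mollifier are both genuinely needed.

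Two small corrections. First, a \emph{nonnegative} band-limited minorant of ${\mathbf 1}_{[0,h)}$ cannot exist: by Paley--Wiener a band-limited $L^1$ function extends to an entire function of exponential type, so if it were nonnegative and $\le {\mathbf 1}_{[0,h)}$ it would vanish off $[0,h)$ and hence identically. Drop the nonnegativity requirement on $g_\epsilon^-$ --- it is never used, since the sandwich $\E{g_\epsilon^-(S_n-x)}\le \p{x\le S_n<x+h}$ needs only $g_\epsilon^-\le {\mathbf 1}_{[0,h)}$; the Selberg minorant, or a Fej\'er-kernel construction, does the job. Second, in reducing the lattice case to span one, note that for a lattice of span $1/d$ the local limit theorem acquires a factor $1/d$ on the right-hand side, which your affine change of variables must produce; as written (both in your sketch and, in fact, in the statement as printed in the paper) the formula is correct only when $d=1$, so the normalization should be made explicit. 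Neither point affects the correctness of the overall strategy.
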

\begin{proof}[Proof of Theorem \ref{thm:gbt_new}]
	We first remark that when $X$ is a lattice random variable with period $d$, if $A=1/d$ then $[k,k+A)$ contains precisely 
	one element from the lattice of $X$.  To shorten the formulas in the proof, we assume that $X$ is indeed lattice, 
	that $A=1/d$, and that $k$ is in the lattice of $X$, so that $k \leq S_n < k+A$ if and only if $S_n=k$. The proof of the 
	more general formulation requires only a line-by-line rewriting of what follows.
	
	Throughout this proof, $c_1,c_2$, and $c_3$ are the constants from Lemma \ref{lem:pem_per} and $\epsilon$ is the constant from Corollary \ref{cor:pem_per}. 
	We first prove the upper bound. We assume that $n \geq 4$. 
	Let $S^r$ be the random walk with $S^r_0=0$ and, for 
	$i$ with $0 \leq i < n$, with $S_r^{i+1}=S^r_i - X_{n-i}$. Define $T_0$ as in Lemma \ref{lem:pem_per} 
	and let $T^r_{k}$ be the minimum of $n$ and the first time $t$ that $S^r_t \leq -k$. 
	
	In order that $S_n = k$ and $S_i > 0$ for all $0 < i < n$, it is necessary that 
	\begin{itemize}
		\item $T_0 > \lfloor n/4 \rfloor$, 
		\item $T^r_{k+A} > \lfloor n/4 \rfloor$, and 
		\item $S_n=k$.
	\end{itemize}
	By the independence of disjoint sections of the random walk and by Lemma \ref{lem:pem_per} (iii), we have 
	\begin{eqnarray}
		\p{T_0 > \lfloor n/4 \rfloor, T^r_{k} > \lfloor n/4 \rfloor} & = & \p{T_0 > \lfloor n/4 \rfloor}\p{T^r_{k} > \lfloor n/4 \rfloor}\nonumber\\
				& \leq & \frac{c_3 \max\{k,1\}}{(\sqrt{\lfloor n/4\rfloor})^2} < \frac{8 c_3 \max\{k,1\}}{n}. \label{eq:gbt_new1}
	\end{eqnarray}
	We next rewrite the condition $S_n =k$ as 
		\[S_{\lceil 3n/4\rceil} - S_{\lfloor n/4\rfloor} = k - S_{\lfloor n/4\rfloor} - S_{\lfloor n/4\rfloor}^r.\] 
	Since $S_{\lceil 3n/4\rceil} - S_{\lfloor n/4\rfloor}$ is independent of $S_{\lfloor n/4\rfloor}$, of $S_{\lfloor n/4\rfloor}^r$, and of the events $\{T_0 > \lfloor n/4 \rfloor\}$ and $\{T^r_{k+A} > \lfloor n/4 \rfloor\}$, we have 
	\begin{equation*}
		\probC{S_n =k}{T_0 > \lfloor n/4 \rfloor, T^r_{k} > \lfloor n/4 \rfloor} \leq  \sup_{r\in \R} \p{S_{\lceil 3n/4\rceil} - S_{\lfloor n/4\rfloor} =r}.
	\end{equation*}
	By Theorem \ref{thm:spread}, we thus have 
	\begin{equation}\label{eq:gbt_new2}
	\probC{S_n =k}{T_0 > \lfloor n/4 \rfloor, T^r_{k} > \lfloor n/4 \rfloor} \leq \frac{c}{\sqrt{\lceil 3n/4\rceil - \lfloor n/4\rfloor}} < \frac{2 c}{\sqrt{n}},
	\end{equation}
	where $c$ is the constant from Theorem \ref{thm:spread}. Combining (\ref{eq:gbt_new1}) and (\ref{eq:gbt_new2}) proves the upper bound. 
	
	Fix $\alpha$ with $0 <\alpha < 1/2$ so that $4(1-2\alpha)\E{X^2}<\epsilon^2$, where $\epsilon$ is the constant from Corollary \ref{cor:pem_per}. 
	In order that $S_n =k$ and $S_i > 0$ for all $0 < i < n$, it is sufficient that the following events occur:
	\begin{itemize}
		\item[$E_1$:] $T_0 > \lfloor \alpha n \rfloor$ and $\epsilon\sqrt{n} \leq S_{ \lfloor \alpha n \rfloor} \leq \sqrt{6c_2n}$,
		\item[$E_2$:] $T_{k}^r > \lfloor \alpha n \rfloor$ and $\epsilon\sqrt{n} \leq S_{ \lfloor \alpha n \rfloor}^r \leq \sqrt{6c_2n}$,
		\item[$E_3$:] $\min_{\lfloor \alpha n \rfloor \leq i \leq \lceil (1-\alpha) n \rceil} S_i-S_{\lfloor \alpha n \rfloor} > -S_{\lfloor \alpha n \rfloor}$, and 
		\item[$E_4$:] $S_n =k$. 
	\end{itemize}
	By the independence of disjoint sections of the random walk, we therefore have 
	\begin{equation}\label{eq:gbt_new2.5}
		\p{S_n=k,S_i > 0~\forall~0 < i < n} \geq \p{E_1}\cdot \p{E_2} \cdot \probC{E_3,E_4}{E_1,E_2}.
	\end{equation}
	By Lemma \ref{lem:pem_per} (i), we have $\p{T_0 > \lfloor \alpha n \rfloor} \geq c_1/\sqrt{n}$, so by the FKG inequality we see that 
	\begin{equation}\label{eq:gbt_new3}
		\p{T_0 > \lfloor \alpha n \rfloor, S_{\lfloor \alpha n \rfloor} \geq \epsilon\sqrt{n} } \geq \frac{c_1}{3\sqrt{n}}.
	\end{equation}
	By applying Corollary \ref{cor:pem_per} and Markov's inequality, we also have  
	\begin{equation}\label{eq:gbt_new4}
		\probC{S_{ \lfloor \alpha n \rfloor} > \sqrt{6c_2n}}{T_0 > \lfloor \alpha n \rfloor, S_{\lfloor \alpha n \rfloor} \geq \epsilon\sqrt{n}} \leq \frac{\E{S_n^2~\vert~T_0 > \lfloor \alpha n\rfloor, S_{\alpha n} \geq \epsilon n}}{6c_2n} 
			\leq \frac{1}{2},
	\end{equation}
	and (\ref{eq:gbt_new3}) and (\ref{eq:gbt_new4}) together imply that 
	\begin{equation}\label{eq:gbt_new5}
	\p{E_1} \geq \frac{c_1}{6\sqrt{n}}.
	\end{equation}
	An identical argument shows that $\p{E_2} \geq c_1\cdot \max\{k,1\}/(6\sqrt{n})$. By (\ref{eq:gbt_new2.5}), 
	to prove the lower bound it thus suffices to show that 
	there is $\gamma > 0$ not depending on $n$ or on our choice of $k$ and such that $\probC{E_3,E_4}{E_1,E_2} \geq \gamma/\sqrt{n}$; we now turn to 
	establishing such a bound.
	
	%$S^{(\ell)}=\sum_{i=\lfloor \alpha n\rfloor + 1}^{\lfloor n/2\rfloor} X_i$, let $S^{(r)}=\sum_{i=\lfloor n/2\rfloor+1}^{\lceil (1-\alpha)n\rceil} X_i$, and
	%let $S^{(m)} = S^{(\ell)}+S^{(r)}$. 
	Let $m =\lceil (1-\alpha)n\rceil-\lfloor \alpha n \rfloor$. For $1 \leq i \leq m$, let 
	\[
	L_i = S_{\lfloor \alpha n \rfloor+i}-S_{\lfloor \alpha n \rfloor}, \hspace{0.2cm} \mbox{and let} \hspace{0.2cm} R_i=S_{\lceil (1-\alpha)n\rceil-i}-S_{\lceil (1-\alpha)n\rceil},
	\] 
	so in particular $L_m=-R_m$. 
	%$L_{\lfloor n/2\rfloor-\lfloor \alpha n\rfloor}=S^{(\ell)}$ and $R_{(1-\alpha)n\rceil-\lfloor n/2\rfloor}=-S^{(r)}$. 
	Next, rewrite the event $E_4$ as 
	\[
		L_m = k - S_{\lfloor \alpha n\rfloor}-S^r_{\lfloor \alpha n\rfloor}.
	\]
	By the independence of disjoint sections of the random walk, we then have 
	\begin{eqnarray}\label{eq:gbt_new6}
	\probC{E_3,E_4}{E_1,E_2} 	&\geq	& \inf_{p,q\in [\epsilon\sqrt{n},\sqrt{6c_2n}]\cap \mathbb{L}_X} \left\{\mathbf{P}\big\{L_m= -p+(k+q), \min_{1 \leq i \leq m} L_i > -p\big\}\right\}.
	\end{eqnarray}
	For $p$ and $q$ as in (\ref{eq:gbt_new6}), we define the following shorthand:
	\begin{itemize}
		\item $A_{p,q}$ is the event that $L_m=-p+(k+q)$, and
		\item $B_p$ is the event that $\min_{1 \leq i \leq m} L_i > -p$. 
	\end{itemize}
	We bound $\p{A_{p,q},B_p}$ by first writing 
	\begin{equation}\label{eq:gbt_new7}
		\p{A_{p,q},B_p} \geq \p{A_{p,q}}-\p{A_{p,q},\overline{B_p}}.
	\end{equation}
	By Theorem \ref{thm:stone}, for all $k$ with $0 \leq k \leq \sqrt{n}$ and all $p,q \in [\epsilon\sqrt{n},\sqrt{6c_2n}]\cap\mathbb{L}_X$, 
	\begin{equation}\label{eq:gbt_new8}
		\p{A_{p,q}} = (1+o(1))\frac{e^{-(k+q-p)^2/(2\sigma^2 m)}}{\sqrt{2\pi \sigma^2 m}},
	\end{equation}
	where $o(1)\rightarrow 0$ as $n \rightarrow \infty$, uniformly over all $k$, $p$, and $q$ as above. 
	To bound $\p{A_{p,q},\overline{B_p}}$ from above, we first further divide the events $A_{p,q}$, $B_p$. Let $m'=\lfloor n/2\rfloor - \lfloor \alpha n \rfloor$. Observe that 
	$A_{p,q}$ occurs if and only if $R_m = -(k+q)+p$. Similarly, if $A_{p,q}$ occurs, then for $\overline{B_p}$ to occur one of the following events must occur: either
	\begin{enumerate}
		\item $\min_{1 \leq i \leq m'} L_i \leq -p$ (we call this event $C_p$); or 
		\item $\min_{1 \leq i \leq m-m'} R_i \leq -(k+q)$ (we call this event $D_q$). 
	\end{enumerate}
	Thus, 
	\begin{equation}\label{eq:gbt_new9}
		\p{A_{p,q},\overline{B_p}} \leq \p{C_p, L_m=-p+(k+q)} + \p{D_q,R_m=-(k+q)+p}.
	\end{equation}
	By Kolmogorov's inequality and our choice of $\alpha$, 
	\begin{equation}\label{eq:gbt_new10}
		\p{C_p} \leq \frac{\E{L_m^2}}{p^2}= \frac{\E{X^2}\cdot mm}{p^2} \leq \frac{\E{X^2}((1-2\alpha)n+1)}{\epsilon^2 n} < \frac{1}{4},
	\end{equation}
	for all $n$ sufficiently large. 
	Furthermore, since for any $i$ with $1 \leq i \leq m'$, $L_m-L_i$ is a sum of $m-i \geq m-m' \geq \lceil (1-\alpha)n \rceil - \lfloor n/2 \rfloor$ copies of $X$, 
	by the independence of disjoint sections of the random walk and by Theorem \ref{thm:stone}, 
	\begin{eqnarray}\label{eq:gbt_new11}
		\probC{L_m = -p+(k+q)}{L_i \leq -p} 	&=& (1+o(1))\frac{e^{-(k+q)^2/(2\sigma^2(m-i))}}{\sqrt{2\pi\sigma^2(m-i)}} \nonumber\\
									&\leq& (1+o(1)) \frac{e^{-(k+q-p)^2/(2\sigma^2 m)}}{\sqrt{2\pi \sigma^2 m}},
	\end{eqnarray}
	where $o(1)\rightarrow 0$ as $n \rightarrow \infty$, uniformly over all $i,k,p$, and $q$ as above. It follows by Bayes' formula that 
	\begin{eqnarray}\label{eq:gbt_new12}
		\p{C_p,L_m=-p+(k+q)} & \leq & \p{C_p}\cdot \max_{1 \leq i \leq m} \probC{L_m = -p+(k+q)}{L_i \leq -p} \nonumber\\
						    & \leq & (1+o(1)) \frac{e^{-(k+q-p)^2/(2\sigma^2 m)}}{4\sqrt{2\pi \sigma^2 m}}.
	\end{eqnarray}
	A similar calculation shows that 
	\begin{equation}\label{eq:gbt_new13}
		\p{D_q,R_m = -(k+q)+p} \leq (1+o(1)) \frac{e^{-(k+q-p)^2/(2\sigma^2 m)}}{4\sqrt{2\pi \sigma^2 m}},
	\end{equation}
	and combining (\ref{eq:gbt_new7}), (\ref{eq:gbt_new8}), (\ref{eq:gbt_new12}) and (\ref{eq:gbt_new13}), since $m$ and $n$ have the same order we see that 
	\begin{equation}\label{eq:gbt_new14}
		\p{A_{p,q},B_p} \geq (1+o(1))\frac{e^{-(k+q-p)^2/(2\sigma^2 m}}{2\sqrt{2\pi \sigma^2 m}} \geq \frac{\gamma}{\sqrt{n}},
	\end{equation}
	for all $k$, $p$ and $q$ in the allowed ranges, for some $\gamma$ not depending on $k$, $p$, $q$, or $n$. 
	Together (\ref{eq:gbt_new6}) and (\ref{eq:gbt_new14}) establish the required lower bound on $\probC{E_3,E_4}{E_1,E_2}$ and complete the proof.
\end{proof}

\section{Counterexamples}\label{sec:counterexamples}
%Is the condition that $X \in \ssd$ in the above ballot theorems really necessary? 
%What about the condition that $k=O(\sqrt{n})$? The answer, it turns out, 
%is that neither condition can in general be removed if we want to guarantee that a 
%ballot-style theorem holds. 

In Section \ref{sec:normal} we exhibit a random walk $S$ with mean zero step, finite variance step size $X$ and show that 
with $k=n$, $\p{S_n>0~\forall~0<i<n\vert S_n=k}$ is not $\Theta(k/n)=\Theta(1)$ as the ballot theorem would suggest. 
This example may be easily modified to show that we can not in general expect a result of the 
form $\p{S_n>0~\forall~0<i<n\vert S_n=k}=\Theta(k/n)$ 
for any $k=\Omega(\sqrt{n}\log n)$, and we believe that with a little effort it should be possible to 
make this approach work for any $k=\omega(\sqrt{n})$. 

In Section \ref{sec:other} we exhibit a random walk $S$ with step size $X$ for which $X$ is an integer random variable with period $1$, 
and for which $\e{X}=0$, $\E{X^{3/2-\epsilon}}<\infty$ for all $\epsilon>0$, but such that $\mathbf{P}\{S_n>0~\forall~ 0<i<n~\vert~S_n=\sqrt{n}\}$
is not $\Theta(n^{-1/2})$; the same idea can be easily modified to yield a random variable $X$ 
with $\e{X^{\alpha}}<\infty$ for any fixed $\alpha < 2$ and for which the ballot theorem 
can be seen to fail even in the range $S_n=O(\sqrt{n})$. 

The ideas behind our examples are most easily explained from the multiset mentioned in the introduction. 
The ``underlying multiset'' $\ssf$ for our example showing that the condition $k=O(n)$ is necessary 
consists of $(n-1)/2$ elements of value $1$, the same number of elements of value $-1$'s, and a single element of value $n$. 
The elements of $\ssf$ sum to $n$, and in order for all partial sums to stay positive, it 
is necessary and sufficient that the partial sums not containing the element $n$ stay positive 
(as all partial sums containing $n$ are certainly positive). 
Denoting the elements of $\ssf$ by $x_1,\ldots,x_n$ and letting $\sigma$ be a uniformly 
random permutation of $\{1,\ldots,n\}$, the index $i$ for which $x_{\sigma(i)}=n$ 
is uniform among $\{1,\ldots,n\}$. Letting $S_{\sigma(i)}=\sum_{j=1}^i x_{\sigma(j)}$ for $0<i\leq n$, 
we may thus write
\bea
\p{S_{\sigma(i)}>0~\forall~0<i<n} &=& \frac{1}{n}\cdot\sum_{j=1}^n \p{S_{\sigma(i)}>0~\forall~0<i<n\vert x_{\sigma(j)}=n} \eal
                        &=& \frac{1}{n}\cdot\sum_{j=1}^n \p{S_{\sigma(i)}>0~\forall~0<i<j\vert x_{\sigma(j)}=n}. \label{eq:set_example}
\eea
For a symmetric simple random walk $S'$, it is well-known (see, e.g., \cite{feller68intro1}, Lemma III.3.1) that for integers $j>0$, 
$\p{S_i'>0~\forall~0<i<j}=O(j^{-1/2})$. Making the hopefully plausible leap of faith that the same bound holds for 
$\p{S_{\sigma(i)}>0~\forall~0<i<j\vert x_{\sigma(j)=n}}$, (\ref{eq:set_example}) then yields that 
\[\p{S_{\sigma(i)}>0~\forall~0<i<n}=\frac{1}{n}\cdot\sum_{j=1}^nO\left(\frac{1}{\sqrt{j}}\right) = O\left(\frac{1}{\sqrt{n}}\right),\]
not $\Theta(1)$ as the ballot theorem would suggest. 
Turning this intuition into the example of Section \ref{sec:normal} is a matter of finding a random walk $S$ with 
mean zero step size $X\in \ssd$ for which, given that $S_n=n$, the set $\{X_1,\ldots,X_n\}$ very likely looks 
like the set $\ssf$ above, i.e., there is a single index $i$ for which $X_i=n$, and for all other $j$, $X_j$ is ``small''. 

For the example showing that the condition $\E{X^2} < \infty$, ``underlying multiset'' we are thinking of 
consists of roughly $(n-n^{1/4})/2$ elements of value $1$, the same number of elements of value $+1$, 
$(n^{1/4}+1)/2$ elements of value $\sqrt{n}$, and $(n^{1/4}-1)/2$ elements of value $-\sqrt{n}$. 
These elements sum to $\sqrt{n}$. 

For all partial sums in a uniformly random permutation of this multiset to stay positive, 
it is necessary that the partial sums stay positive until an element of value $\sqrt{n}$ is sampled -- 
this should occur after about $n^{3/4}$ elements have been sampled, so the intuition given by a 
symmetric simple random walk suggests that the partial sums stay positive until this 
time with probability of order $O(n^{-(3/4)\cdot(1/2)})=O(n^{-3/8})$. 

In order that the partial sums stay positive, it is also essentially necessary that the 
``sub-random walk'' consisting of the partial sums {\em of only elements of absolute value $\sqrt{n}$} 
stays positive -- for if this ``sub-random walk'' becomes extremely negative 
then it is very unlikely that the original partial sums stay positive. 
Dividing through by $n^{1/2}$ we can view this ``sub-random walk'' as a symmetric simple random walk 
$S'$ of length $n^{1/4}$, conditioned 
on having $S_{n^{1/4}}'=1$. By the ballot theorem, the probability such a random walk stays positive is 
$O(1/n^{1/4})$. Combining the bounds of the this paragraph and the previous 
paragraph as though the two events were independent (which, though clearly false, gives the correct 
intuition) suggests that the original partial sums should stay positive with probability $O(1/n^{3/8+1/4})=O(1/n^{5/8})$, 
not $\Theta(n^{-1/2})$ as the ballot theorem would suggest.  

Before we turn to the details of these examples, we first spend a moment gathering two easy lemmas that we will use in the course of 
their explanation. 

\subsection{Two Easy Lemmas}
The first lemma bounds the probability that a random walk stays above zero until some time $m$, and 
is a simplification of  \cite{feller68intro2}, 
Theorem XII.7.12a.
\begin{lem}\label{lem:easy1}
Given a random walk $S$ with step size $X$, if $X$ is symmetric then for integers 
$m > 0$, 
\[\p{S_i>0~\forall~0<i\leq m}=\Theta\left(\frac{1}{\sqrt{m}}\right)\]
\end{lem}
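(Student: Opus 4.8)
The plan is to derive the result from Sparre Andersen's fluctuation identity together with the spread-out bound already quoted as Theorem~\ref{thm:spread}. Write $p_m=\p{S_i>0~\forall~0<i\leq m}$ for $m\geq1$ and $p_0=1$; note that $(p_m)$ is non-increasing since the defining events are nested, and that we may assume $X$ non-degenerate (otherwise $p_m\equiv0$), so that a symmetric $X$ has positive, possibly infinite, variance. Sparre Andersen's identity (see \cite{feller68intro2}, Ch.~XII), which requires no regularity of the step distribution, gives
\[
\sum_{m=0}^{\infty}p_m s^m=\exp\left(\sum_{n=1}^{\infty}\frac{s^n}{n}\p{S_n>0}\right),\qquad 0\leq s<1.
\]
The first step is to evaluate $\p{S_n>0}$ using symmetry: $S_n$ and $-S_n$ are equidistributed, so $\p{S_n>0}=\p{S_n<0}=\tfrac12\bigl(1-\p{S_n=0}\bigr)$. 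Substituting and separating the $\tfrac12$ from the correction term yields
\[
\sum_{m=0}^{\infty}p_m s^m=(1-s)^{-1/2}\,g(s),\qquad g(s):=\exp\left(-\frac12\sum_{n=1}^{\infty}\frac{s^n}{n}\p{S_n=0}\right).
\]

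The second step is to control $g$ near $s=1$. By Theorem~\ref{thm:spread} there is $c>0$ with $\p{S_n=0}\leq\sup_x\p{x\leq S_n\leq x+1}\leq c/\sqrt n$ for all $n$, hence $\sum_n\p{S_n=0}/n\leq c\sum_n n^{-3/2}<\infty$; by Abel's theorem $g$ extends to a continuous function on $[0,1]$ that is strictly positive there (it is non-increasing with $g(0)=1$ and $g(1)=\exp(-\tfrac12\sum_n\p{S_n=0}/n)\in(0,1]$). Consequently $\sum_m p_m s^m\sim g(1)\,(1-s)^{-1/2}$ as $s\uparrow1$. Since $(p_m)$ is monotone, the Tauberian theorem for power series with monotone coefficients (\cite{feller68intro2}, Ch.~XIII) converts this into $p_m\sim \frac{g(1)}{\Gamma(1/2)}m^{-1/2}=\frac{g(1)}{\sqrt{\pi m}}$, which is the asserted $\Theta(m^{-1/2})$. (For the upper bound alone one can skip the Tauberian step: as $\p{S_n>0}\leq\tfrac12$ and $\exp$ is coefficientwise monotone on power series with non-negative coefficients, $p_m\leq[s^m](1-s)^{-1/2}=\binom{2m}{m}2^{-2m}\leq 1/\sqrt{\pi m}$ directly.)

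I expect the genuine content to be entirely in the lower bound, i.e.\ in showing $p_m$ does not decay faster than $m^{-1/2}$; this is exactly the positivity of $g(1)$, equivalently the summability of $\sum_n\p{S_n=0}/n$, which is what the spread-out estimate of Theorem~\ref{thm:spread} supplies and which is the only place the lattice case $\p{S_n=0}>0$ intervenes. The mild technical points are getting the strict-inequality version of the Sparre Andersen identity stated correctly and justifying the Tauberian passage from the generating-function asymptotics back to the coefficients $p_m$ via their monotonicity; both are standard. Everything else is bookkeeping.
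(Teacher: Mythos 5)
Your proof is correct and self-contained. The paper does not actually prove Lemma~\ref{lem:easy1}; it simply refers the reader to \citet{feller68intro2}, Theorem XII.7.12a, which is precisely the kind of asymptotic for $\p{S_1>0,\dots,S_m>0}$ that one extracts from Sparre Andersen/Spitzer theory, so your argument is best read as supplying the proof the paper elides rather than as a genuinely different route. The pivotal moves all check out: the strict-inequality form of the Spitzer generating-function identity is the correct one for the event $\{S_i>0\;\forall\;0<i\le m\}$; symmetry gives $\p{S_n>0}=\tfrac12\bigl(1-\p{S_n=0}\bigr)$, which cleanly isolates the $(1-s)^{-1/2}$ singularity; Theorem~\ref{thm:spread} gives $\p{S_n=0}=O(n^{-1/2})$, hence $\sum_n\p{S_n=0}/n<\infty$ and $g(1)>0$ --- this is the one place the possible atom of $S_n$ at $0$ must be controlled, and Kesten's spread-out bound handles it uniformly across lattice and non-lattice cases; and the monotonicity of $p_m$ is exactly the extra hypothesis Karamata's Tauberian theorem needs to pass from the $(1-s)^{-1/2}$ blow-up of the generating function to the $m^{-1/2}$ decay of the coefficients themselves. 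Your parenthetical upper-bound shortcut (coefficientwise monotonicity of $\exp$ on non-negative power series, applied to $\p{S_n>0}\le\tfrac12$) is also valid and yields the clean bound $p_m\le\binom{2m}{m}4^{-m}\le1/\sqrt{\pi m}$ with no Tauberian input. You are also right that non-degeneracy of $X$ is needed and is left tacit in the paper's statement of the lemma.
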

The second lemma is an easy extension of classical \cite{chernoff52measure} bounds to a setting in which the number 
of terms in the binomial is random. The classical Chernoff bounds 
(see, e.g., (2.5) and (2.6) in \cite{janson00random} for a modern reference), state: 
given a binomial random variable $\Bin(n,p)$ with mean $\mu=np$, 
for all $c > 0$, 
\bea
\p{\Bin(n,p) > (1+c)\mu} & \leq & e^{-c^2\mu/(2(1+c/3))}\label{eq:chernoff_u}\\
\p{\Bin(n,p) < (1-c)\mu} & \leq & e^{-c^2\mu/2}\label{eq:chernoff_l}
\eea
The following lemma follows from the Chernoff bounds by straightforward applications 
of Bayes' formula:
\begin{lem}\label{lem:chernoff_rand}
        Let $m$ be a positive integer, let $0 < q < 1$, and let $U$ be distributed as $\Bin(m,q)$. 
        Let $v$ be a positive real number and let $V_1,V_2,\ldots$ be i.i.d.~random variables 
        taking values $\pm v$, each with probability $1/2$. Finally, let $Y=V_1+\ldots+V_U$. 
        The for all $t>0$, 
        \bea
        \p{Y > t} & \leq &\exp\left\{-\frac{t^2}{8mq+\frac{4tv}{3}}\right\}+\exp\left\{-\frac{mq}{3}\right\},\quad\mbox{and}\eal
        \p{Y < -t} & \leq & \exp\left\{-\frac{t^2}{8mq}\right\}+\exp\left\{-\frac{mq}{3}\right\}.\nonumber
        \eea
\end{lem}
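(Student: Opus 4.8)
The plan is to condition on the number of summands $U$ and treat separately the unlikely event that $U$ greatly exceeds its mean $mq$. Writing $\p{Y > t} \le \p{Y > t,\ U \le 2mq} + \p{U > 2mq}$ (and likewise for $\p{Y < -t}$), the tail of $U$ is handled immediately by the classical bound (\ref{eq:chernoff_u}) applied with $c = 1$: $\p{U > 2mq} \le \exp\{-mq/(2(1+1/3))\} = \exp\{-3mq/8\} \le \exp\{-mq/3\}$, which supplies the second summand in both inequalities of the lemma.

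For the first term I would fix an integer $u$ with $1 \le u \le 2mq$ (the case $u = 0$ contributing nothing, since then $Y = 0$ and neither $\{Y>t\}$ nor $\{Y<-t\}$ can occur) and reduce the conditional tail of $Y$ to a Chernoff bound for a centred binomial. Conditionally on $U = u$ we may write $Y = v(2W - u)$ with $W$ distributed as $\Bin(u,1/2)$, so that $\{Y > t\}$ is exactly the event $\{W > (1+c)u/2\}$ and $\{Y < -t\}$ is exactly $\{W < (1-c)u/2\}$, where $c = t/(uv)$. Inserting this into (\ref{eq:chernoff_u}) and (\ref{eq:chernoff_l}) and simplifying yields
\[
\Cprob{Y > t}{U = u} \le \exp\left\{-\frac{t^2}{4uv^2 + 4tv/3}\right\}
\quad\text{and}\quad
\Cprob{Y < -t}{U = u} \le \exp\left\{-\frac{t^2}{4uv^2}\right\}.
\]
Since both right-hand sides are nondecreasing in $u$, on the range $1 \le u \le 2mq$ they are dominated, uniformly in $u$, by the values obtained at $u = 2mq$, namely $\exp\{-t^2/(8mqv^2 + 4tv/3)\}$ and $\exp\{-t^2/(8mqv^2)\}$ (the $8mq$ of the statement being understood with the factor $v^2$). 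Averaging these $u$-free bounds against the law of $U$ over $\{U \le 2mq\}$ does not change them, and adding the estimate for $\p{U > 2mq}$ from the first step gives the two claimed inequalities.

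I do not expect a genuine obstacle here; the argument is routine. The two points that actually require attention are (i) setting up the centred-binomial reformulation so that (\ref{eq:chernoff_u})--(\ref{eq:chernoff_l}) apply verbatim with $\mu = u/2$ and $c = t/(uv)$, and (ii) checking the monotonicity of the conditional tail estimates in the number of summands $u$ — it is this monotonicity that lets the randomness of $U$ be absorbed into a single worst-case value rather than forcing a more delicate summation against the distribution of $U$. The choice of threshold $2mq$ is dictated by wanting the $4u$ in the conditional bound to turn into the $8mq$ of the statement while keeping the discarded probability $\p{U > 2mq}$ exponentially small in $mq$.
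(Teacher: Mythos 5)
Your argument is correct and essentially identical to the paper's: both split on $\{U \le 2mq\}$, bound $\p{U > 2mq}$ via (\ref{eq:chernoff_u}) at $c=1$, and reduce $Y$ conditioned on $U=u$ to a centred $\Bin(u,1/2)$ with $\mu = u/2$ and $c = t/(uv)$. Your parenthetical about the missing $v^2$ is, however, a genuine catch: the conditional Chernoff bound gives $\exp\{-t^2/(4uv^2 + 4tv/3)\}$, so taking $u \le 2mq$ yields $8mqv^2$ in the denominator, and the lemma as stated (with $8mq$) is actually false for $v > 1$ — the paper's proof tries to drop the $v^2$ by invoking ``$v \ge 1$,'' but that inequality points the wrong way (one would need $v \le 1$); tellingly, the paper's own application of the lemma in (\ref{eq:tilarge_bound}) substitutes $8n/g(i)^2 = 8mqv^2$, not $8mq$, confirming that the statement should carry the factor $v^2$.
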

\begin{proof}
        As $\e{U}=mq$, by (\ref{eq:chernoff_u}) we have 
        \bea
                \p{Y > t} & \leq & \sum_{u=1}^{\lfloor 2mq \rfloor} \p{Y>t\vert U=u}\p{U=u} + \p{U>2mq}\eal
                          & \leq & \sup_{u \leq 2mq} \p{Y>t \vert U=u} + \p{U>2mq} \eal
                                  & \leq & \sup_{u \leq 2mq} \p{Y>t \vert U=u} + e^{-\frac{mq}{3}}\label{eq:ch_ra_1}
        \eea
        Given that $U=u$, $(Y+uv)/2v$, which we denote $Y'$, is distributed as $\Bin(u,1/2)$. 
        Furthermore, in this case $Y > t$ if and only if 
        \[ Y' > \frac{u}{2}+\frac{t}{2v} = \frac{u}{2}\left(1+\frac{t}{uv}\right).\]
        It follows by (\ref{eq:chernoff_u}) that 
        \bea
        \p{Y>t\vert U=u} & \leq & \exp\left\{-\frac{u}{2}\left(\frac{t}{uv}\right)^2\left(2+\frac{2t}{3uv}\right)^{-1}\right\} \eal
                                         & =    & \exp\left\{-\frac{t^2}{4uv^2+\frac{4tv}{3}}\right\} \eal 
										 & < 	& \exp\left\{-\frac{t^2}{8mq+\frac{4tv}{3}}\right\},\nonumber
        \eea
        where in the last inequality we use that $u \leq 2mq$ and $v \geq 1$. As $u$ was arbitrary, combining this inequality with 
        (\ref{eq:ch_ra_1}) yields the desired bound on $\p{Y>t}$. The bound on $\p{Y<-t}$ is proved identically.
\end{proof}

\subsection{Optimality for normal random walks}\label{sec:normal} 
Let $f$ be the tower function: $f(0)=1$ and $f(k+1)=2^{f(k)}$ for integers $k \geq 0$. We define a random variable $X$ as follows: 
\[X =   \begin{cases}
                        \pm 1, & \mbox{each with probability }\frac{1}{2}\cdot\left(1-\sum_{i=0}^{\infty}\frac{1}{f(k)^4}\right) \\
                        \pm f(k), & \mbox{each with probability }\frac{1}{2f(k)^4},\mbox{ for }k=1,2,\ldots \\
                \end{cases}
\]
and let $S$ be a random walk with steps distributed as $X$. 
Clearly $\e{X}=0$, and it is easily checked that $\E{X^2} < 2$. 
We will show that when $n=f(k)$ for positive integers $k$, 
$\p{S_i > 0~\forall 0 < i < n \vert S_n=n}$ is $O(1/\sqrt{n})$, so in 
particular, for such values of $n$ this probability is not 
$\Theta(1)$ as the ballot theorem would suggest. 

For $i \geq 0$ we let $N_i$ be the number of times $t$ with $1 \leq t \leq n$ that $|X_t|=f(i)$; 
clearly $\sum_{i=0}^{\infty} N_i=n$. 
For $S_n=n$ to occur, it suffices that for some $t$ with $1 \leq t \leq n$, 
$X_t=n$ and $S_n-X_t=0$; therefore
\[\p{S_n=n} \geq \sum_{t=1}^n\p{X_t=n,S_n-X_t=0} \geq \sum_{t=1}^n\frac{\p{S_n-X_t=0}}{2n^4}.\]
As $\e{X^2}<\infty$ and, for all $1 \leq t \leq n$, $S_n-X_t$ is simply a sum of $n-t$ independent copies 
of $X$, by Theorem \ref{thm:stone} we know that $\p{S_n-X_t=0}=\Theta(n^{-1/2})$ uniformly over 
$1 \leq i \leq n$.  
It follows that 
\begin{equation}\label{eq:prob_snequalsn}
        \p{S_n=n}=\Omega\left(\frac{1}{n^{7/2}}\right).
\end{equation}
We next bound the probability that $S_n=n$ and $S_t > 0$ for all $0 < t < n$; 
we denote this conjunction of events $E$. Our aim is to show that $\p{E}=O(n^{-4})$, 
which together with (\ref{eq:prob_snequalsn}) will establish our claim that 
$\p{S_i > 0~\forall~0 < i < n \vert S_n=n}$ is $O(1/\sqrt{n})$. Recalling that 
$f(k)=n$, we write 
\bea
\p{E} & = & \p{E,N_k=0}+\p{E,N_k\geq 1}.\label{eq:split_four}
\eea
It is easy to show using Chernoff bounds that 
$\p{E,N_k=0}=O(n^{-6})$ (we postpone this step for the moment). 
From this fact and from (\ref{eq:split_four}), we therefore have 
\[\p{E} = \p{E,N_k\geq1} + O\left(n^{-6}\right),\]
from which it will follow that $\p{E}=O(n^{-4})$ if we can show that 
$\p{E,N_k\geq1}=O(n^{-4})$. We first do so, then justify our assertion that 
$\p{E, N_k=0}=O(n^{-6})$.

For $E$ and $\{N_k\geq1\}$ to occur, one of the following events must occur.
\begin{itemize}
        \item For some $t$ with $1 \leq t \leq \lfloor n/2 \rfloor$, $X_t=\pm n$, $S_i>0$ for all $0 < i < t$, 
        and $S_n=n$. We denote these events $B_t$, for $1 \leq t \leq \lfloor n/2 \rfloor$, 
        and remark that they are {\em not} disjoint. 
        \item $S_i > 0$ for all $0 < i < \lfloor n/2 \rfloor$, and for some $t$ with $\lfloor n/2 \rfloor < t \leq n$, 
        $X_t=\pm n$ and $S_n=n$. We denote these events 
        $D_t$ for $\lfloor n/2 \rfloor \leq t \leq n$; again, they are not disjoint. 
\end{itemize}
We first bound the probabilities of the events $B_t$, $1 \leq t \leq \lfloor n/2 \rfloor$. Fix some $t$ 
in this range -- by Lemma \ref{lem:easy1}, the probability that $S_i > 0$ for all 
$0 < i < t$ is $O(t^{-1/2})$ uniformly in $t$. By definition, $\p{X_t=\pm n}=n^{-4}$, so by the strong Markov property, 
\[\p{S_i > 0~\forall~0 < i < t,X_t=\pm n}=O\left(\frac{1}{n^4\sqrt{t}}\right),\]
still uniformly in $t$. Furthermore, by Theorem \ref{thm:spread} we have that 
\begin{equation}\label{eq:sn_spread}
        \sup_{r}\p{S_n-S_t=r}=O\left(\frac{1}{\sqrt{n-t}}\right)=O\left(\frac{1}{\sqrt{n}}\right).
\end{equation}
As $S_n-S_t$ and $n-S_t$ are independent, it follows from (\ref{eq:sn_spread}) that $\p{S_n-S_t=n-S_{t}}=O(n^{-1/2})$, 
and by another application of the strong Markov property we have that 
\[\p{B_t} = \p{S_i > 0~\forall 0 < i < t,X_t=\pm n,S_n-S_t=n-S_t}=O\left(\frac{1}{n^{9/2}\sqrt{t}}\right).\] 
Thus,
\bea
\p{\bigcup_{t=1}^{\lfloor n/2\rfloor}B_t}&\leq&\sum_{t=1}^{\lfloor n/2\rfloor}\p{B_t}\leq \sum_{t=1}^{\lfloor n/2\rfloor} 
O\left(\frac{1}{n^{9/2}\sqrt{t}}\right) = O\left(\frac{1}{n^4}\right).\label{eq:bt_union_bound}
\eea
We next bound the probabilities of the events $D_t$, $\lfloor n/2\rfloor < t \leq n$. By an argument just as above, 
we have that 
\[\p{S_i > 0~\forall 0<i<\lfloor n/2\rfloor,X_t=\pm n}=O\left(\frac{1}{n^{9/2}}\right).\]
Also just as above, since $S_n-S_{\lfloor n_2\rfloor}-X_t$ and $n-S_{\lfloor n_2\rfloor}-X_t$ are independent, 
\[\p{S_n-S_{\lfloor n_2\rfloor}-X_t=n-S_{\lfloor n_2\rfloor}-X_t}=O\left(\frac{1}{\sqrt{n}}\right).\]
By the independence of disjoint sections of the random walk we therefore have that $\p{D_t}=O(n^{-5})$, 
and so 
\begin{equation}\label{eq:dt_union_bound}
        \p{\bigcup_{t=\lfloor n/2\rfloor+1}^n D_t}=O\left(\frac{1}{n^{4}}\right). 
\end{equation}
As $E\cap\{N_k\geq1\}$ is contained in $\bigcup_{t=1}^{\lfloor n/2\rfloor}B_t\cup \bigcup_{t=\lfloor n/2\rfloor+1}^n D_t$, 
it follows from (\ref{eq:bt_union_bound}) and (\ref{eq:dt_union_bound}) that $\p{E,N_k \geq 1}=O(n^{-4})$ as claimed. 

We now turn our attention to proving that $\p{E,N_k=0}=O(n^{-6})$. We will in fact show that 
\begin{equation}\label{eq:ns_bound}
        \p{N_k=0,|S_n| \geq 8k\sqrt{n\log n}}=O\left(\frac{1}{n^6}\right),
\end{equation}
which implies the desired bound. 
We recall that $N_i$ is the number of times $t$ with $1 \leq t \leq n$ that $X_t=f(i)$. 
For $0 \leq i < k$, let $S_{n,i}=\sum_{\{1 \leq j \leq n:|X_j|=f(i)\}}X_j$. 
If $N_k=0$ then either $S_n=\sum_{i=1}^{k-1}S_{n,i}$ 
or $N_j>0$ for some $j >k$; 
thus, 
\begin{equation}\label{eq:contained}
        \{N_k=0,|S_n|>8k\sqrt{n\log n}\}\subseteq\left\{\sum_{j=k+1}^{\infty}N_j>0\right\}\cup\bigcup_{i=1}^{k-1}\{|S_{n,i}|>8\sqrt{n\log n}\}.
\end{equation}
For any $1 \leq i < k$, 
$S_{n,i}$ is the sum of $N_i$ i.i.d. random variables taking values $\pm g(i)$, each with probability $1/2$, 
and $N_i$ is distributed as $\Bin(n,g(i)^{-4})$. By Lemma \ref{lem:chernoff_rand}, therefore, 
\bea
\p{|S_{n,i}|>8\sqrt{n\log n}} & \leq & 2\exp\left\{\frac{-64n\log n}{\frac{8n}{g(i)^2}+\frac{32\sqrt{n\log n}g(i)}{3}}\right\}+2\exp\left\{\frac{-n}{3g(i)^4}\right\}.\nonumber
\eea
Since $g(i)\leq \log n$, presuming $n$ is large enough that $32\sqrt{n\log n}g(i)<n$ we thus have 
\bea
\p{|S_{n,i}|>8\sqrt{n\log n}} & \leq & 2\exp\left\{\frac{-64\log n}{9}\right\}+2\exp\left\{\frac{-n}{3\log^4 n}\right\}=O\left(\frac{1}{n^7}\right).\label{eq:tilarge_bound}
\eea
Furthermore, it follows directly from the definition of $X$ that $\p{\sum_{j=k+1}^{\infty}N_j>0}=o(2^{-n})$. 
Applying this fact, (\ref{eq:tilarge_bound}), and (\ref{eq:contained}), 
it follows immediately that 
\[
\p{N_k=0,|S_n|>8k\sqrt{n\log n}} = o(2^{-n}) + O\left(\frac{k}{n^7}\right) = O\left(\frac{1}{n^6}\right)
\]
as claimed.

\subsection{Optimality for other random walks}\label{sec:other}
Let $g:\N\rightarrow \N$ be an rapidly increasing integer-valued function with $g(0)=1$; 
in particular, we choose $g$ such that $g \geq f$ where $f$ is the tower function seen in the previous section. 
\[X =   \begin{cases}
                        \pm 1, & \mbox{each with probability }\frac{1}{2}\left(1-\sum_{i=0}^{\infty}\frac{1}{g(k)^{3/2}}\right) \\
                        \pm g(k), & \mbox{each with probability }\frac{1}{2g(k)^{3/2}},\mbox{ for }k=1,2,\ldots \\
                \end{cases}
\]
and let $S$ be a random walk with steps distributed as $X$. 
Clearly $\e{X}=0$. 
For integers $i>0$, let $\eventn{Pos}_i$ be the event that $S_j > 0$ for all $0 < j \leq i$; 
we also let $\eventn{Pos}_0$ be some event of probability $1$ as it will simplify later equations. 
We will show that when $n=g(k)^2$ for positive integers $k$, 
\begin{equation}\label{eq:ce2_sn}
        \p{S_n=\sqrt{n}} = \Omega\left(\frac{1}{\sqrt{\log n}n^{5/8}}\right), 
\end{equation}
and that 
\begin{equation}\label{eq:ce2_snpos}
        \p{S_n=\sqrt{n},\eventn{Pos}_n} = O\left(\frac{\log^{13/2} n}{n^{5/4}}\right), 
\end{equation}
from which it follows by Bayes' formula that for such values of $n$
$\p{\eventn{Pos}_n \vert S_n=\sqrt{n}}$ is $O(\log^7n/n^{5/8})$, not 
$\Theta(1/\sqrt{n})$ as the ballot theorem would suggest. We now prove 
(\ref{eq:ce2_sn}) and (\ref{eq:ce2_snpos}). In what follows we presume, 
to avoid cumbersome floors and ceilings, that $g(k)=\sqrt{n}$ has been 
chosen so that $\sqrt{\log n}$ and $n^{1/8}$ are both integers. 

For $j=1,2,\ldots$ and $i=0,1,\ldots$, we let $\xf_{j,i}$ be the random set 
$\{X_m:1\leq m\leq j,|X_m|=g(i)\}$, let $N_{j,i}=|\xf_{j,i}|$, and let 
$S_{j,i}=\sum_{X_m \in \xf_{j,i}}X_m$. For all $j=1,2,\ldots$, 
the sets $\xf_{j,0},\xf_{j,1},\ldots$ partition $\{X_1,\ldots,X_j\}$. 
For an integer $k \geq 0$, the {\em $k$-truncated} random walk $S^{(k)}$ is given by 
\[
S^{(k)}_j=\sum_{i=1}^{k}S_{j,i} = \sum_{i=1}^jX_i\I{|X_i|\leq g(k)},
\] 
for $j=1,2,\ldots$. We remark that for any $n$ and any set $\xf \subseteq \{X_1,\ldots,X_n\}$, 
conditional upon the event that $\bigcup_{i=1}^k \xf_{n,i} = \xf$, $S_n^{(k)}$ is simply a sum 
of $|\xf|$ i.i.d.~{\em bounded} random variables with variance at most $g(k)^2$. 
In particular, this implies that {\em after} such conditioning, $S_n^{(k)}$ obeys a local 
central limit theorem around $0$. The key consequence of this fact (for our purposes), 
is that we may choose $g$ to grow fast enough that there exists a small constant $\epsilon>0$ such that:
\bea
        \forall~k \geq 0,~\forall~n'\geq n \geq g(k+1),~\forall~\xf\subset \{X_1,\ldots,X_{n'}\}\mbox{ s.t. }|\xf|=n, & & \eal
        \Cprob{S^{(k)}_{n'}=0}{\{X_i:1\leq i \leq n',|X_i|\leq g(k)\}=\xf}\geq \frac{\epsilon}{g(k)\sqrt{n}};\quad & & \label{eq:normapprox}
\eea
such a constant is guaranteed to exist by Theorem \ref{thm:stone} and our above observation about the 
conditional distribution of $S^{(k)}_n$.

Fix some integer $k\geq 1$ and let $n=g(k)^2$. 
We remark that $\e{N_{n,k}}=n\p{X=g(k)}=n^{1/4}/2$. 
For $S_n=\sqrt{n}$ to occur, it suffices that $S_{n,k}=g(k)=\sqrt{n}$ and that 
$S_n-S_{n,k}=0$. For any subset $\sk$ of $\ssf=\{X_1,\ldots,X_n\}$, $S_{n,k}$ and $S_n-S_{n,k}$ are 
conditionally independent given that $\xf_k=\sk$. 
Letting $\zf$ be the set of subsets of $\ssf$ of odd parity and of size at most $2n^{1/4}$, 
we then have
\bea
\p{S_n=\sqrt{n}} & \geq & \p{S_{n,k}=g(k),S_n-S_{n,k}=0} \eal
                                 & \geq & \p{S_{n,k}=g(k),S_n-S_{n,k}=0,N_{n,i} \leq 2n^{1/4},N_{n,i}\mbox{ odd}} \eal
                                 & \geq & \left(\inf_{\sk\in \zf}\p{S_{n,k}=g(k),S_n-S_{n,k}=0\vert \xf_k=\sk}\right)\cdot\p{N_{n,i} \leq 2n^{1/4},N_{n,i}\mbox{ odd}}\eal
                                 & =    & \left(\inf_{\sk\in \zf}\p{S_{n,k}=g(k)\vert \xf_k=\sk}\p{S_n-S_{n,k}=0\vert \xf_k=\sk}\right)\cdot\left(\frac{1}{2}-o(1)\right),\eal
                                & & \label{eq:twoterms}
\eea
by the aforementioned independence and a Chernoff bound. 

To bound this last formula from below, fix an arbitrary 
element $\sk$ of $\zf$. 
Note that $S_n-S_{n,k}=S^{(k-1)}_n$ unless there is $i>k$ such that 
$\xf_i\neq \emptyset$. Since $\p{|X| > g(k)} = O(g(k+1)^{3/2}) = O(2^{-3n/4})$, it is easily seen that 
$\p{\bigcup_{i=k}^{\infty}\xf_i=\sk \vert\xf_k=\sk}=1-o(2^{-n/2})$. 
Thus, by Bayes' formula,  
\bea
\p{S_n-S_{n,k}=0\vert \xf_k=\sk}  & = & \probC{S_n-S_{n,k}=0}{\bigcup_{i=k}^{\infty}\xf_k=\xf_k=\sk}(1-o(2^{-n/2}))\eal
                        & = & \probC{S^{(k-1)}_n=0}{\bigcup_{i=k}^{\infty}\xf_k=\xf_k=\sk}(1-o(2^{-n/2})).
\eea
Letting $\xf=\{X_1,\ldots,X_n\}-\sk$, the previous equation implies that 
\begin{equation}\label{eq:sn_tk_2}
        \p{S_n-S_{n,k}=0\vert \xf_k=\sk} = \Omega\left(\probC{S^{(k-1)}_n=0}{\{X_i:1\leq i \leq n,|X_i|\leq g(k-1)\}=\xf}\right).
\end{equation}
As $|\sk|\leq 2n^{1/4}$, $|\xf|=n-|\sk|\geq n-2n^{1/4}\geq n^{1/2}=g(k)$, so applying (\ref{eq:normapprox}) in (\ref{eq:sn_tk_2}) 
yields that 
\begin{equation}\label{eq:sn_tk_3}
        \p{S_n-S_{n,k}=0\vert \xf_k=\sk}= \Omega\left(\frac{1}{g(k-1)\sqrt{n-|\sk|}}\right)=\Omega\left(\frac{1}{g(k-1)\sqrt{n}}\right).
\end{equation}
Next, for any set $\sk \in \zf$, given that $\xf_k=\sk$, it follows directly from a binomial approximation that 
$\p{S_{n,k}=g(k)\vert |\xf_k|=\sk}=\Omega(|\sk|^{-1/2})=\Omega(n^{-1/8})$. Plugging this bound and (\ref{eq:sn_tk_3}) into (\ref{eq:twoterms}) 
yields that 
\begin{equation}\label{eq:sn_58_bound}
        \p{S_n=\sqrt{n}}=\Omega\left(\frac{1}{g(k-1)n^{5/8}}\right)=\Omega\left(\frac{1}{\sqrt{\log n}n^{5/8}}\right),
\end{equation}
establishing (\ref{eq:ce2_sn}).

We next turn to our upper bound on $\p{S_n=\sqrt{n},\eventn{Pos}_n}$. 
We shall define several ways in which 
the walk $S$ can ``behave unexpectedly''. We first show that the walk is unlikely to behave 
unexpectedly; it will be fairly easy to show that given that none of the unexpected events occur, 
the probability that $\{S_n=\sqrt{n}\}$ and $\eventn{Pos}_n$ both occur is $O(\log^6 n/n^{5/4})$. 
Combining this bound with our bounds on the probability of unexpected events will yield (\ref{eq:ce2_snpos}).

We first describe and bound the probabilities of the so-called ``unexpected events''. 
Let $B$ be the event that there is $i$ with $1 \leq i \leq n$ for which $|X_i| > g(k)$. By a union bound, 
\bea
\p{B} &\leq& n\p{|X_1|>g(k)} = n\left(\sum_{i=k+1}^{\infty}\p{X_1=g(i)}\right) = n\left(\sum_{i=k+1}^{\infty}\frac{1}{g(i)^{3/2}}\right) \eal
          &=   & nO\left(\frac{1}{2^{3n/4}}\right) = o\left(\frac{1}{2^{n/2}}\right)\label{eq:bbound}
\eea
Next, let $T$ be the first time for which $X_T = g(k) = \sqrt{n}$. 
Letting $t^*=5n^{3/4}\log n$, we have 
\begin{equation}\label{eq:tgrowbound}
        \p{T>t^*} \leq \p{\bigcap_{t=1}^{t^*}\{|X_t|\neq\sqrt{n}\}} = \left(1-\frac{1}{n^{3/4}}\right)^{5n^{3/4}\log n} = O\left(\frac{1}{n^5}\right)
\end{equation}
Now, by another union bound, 
\begin{equation}\label{eq:stbbound_1}
        \p{S_{T-1}>8k\sqrt{t^*\log n},T\leq t^*,B} \leq t^*\sup_{1 \leq t \leq t^*}\p{|S_{t-1}|>8k\sqrt{t^*\log n},T=t,B}
\end{equation}
If $\{T=t\}$ and $B$ occur, then $S_{t-1}=\sum_{i=1}^{k-1}S_{t-1,i}$. Thus, by an argument just as we used to prove 
(\ref{eq:ns_bound}), we can see that 
\begin{equation}
        \p{|S_{t-1}|>8k\sqrt{t^*\log n},T=t,B} = O\left(\frac{1}{n^6}\right), 
\end{equation}
which, combined with (\ref{eq:stbbound_1}), yields that 
\begin{equation}\label{eq:stbbound_2}
        \p{|S_{T-1}|>8k\sqrt{t^*\log n},T\leq t^*,B} = O\left(\frac{t^*}{n^6}\right) = O\left(\frac{1}{n^5}\right).
\end{equation}
Finally, for $0 \leq t < n$, let $\zf_t$ be the set of subsets of $\{t+1,\ldots,n\}$ of size between $n^{1/4}/3$ and $3n^{1/4}/2$. 
Let $\zf=\zf_0$, and let $\{T_1,\ldots,T_R\}$, which we denote $\st$, be the set of indices $i$ with $T < i \leq n$ 
for which $|X_i|=\sqrt{n}$, ordered so that $T < T_1 < \ldots < T_R \leq n$. $R$ is distributed as $\Bin(n-T,n^{-3/4})$, 
so by Bayes' formula, (\ref{eq:chernoff_u}), and (\ref{eq:chernoff_l}),  
\bea
\p{\st \notin \zf,T\leq t^*} & =  & \sum_{t=1}^{t^*}\p{\st \notin \zf \vert T=t}\p{T=t}\eal 
                             & \leq & \sup_{t \leq t^*} \p{\st \notin \zf \vert T=t} = \sup_{t \leq t^*} \p{\st \notin \zf_t \vert T=t} \eal
                                                         &  =   & \sup_{t \leq t^*} \p{\Bin\left(n-t,\frac{1}{n^{3/4}}\right)<\frac{n^{1/4}}{3}\mbox{ or } 
                                                                                                                        \Bin\left(n-t,\frac{1}{n^{3/4}}\right)>\frac{3n^{1/4}}{2}} \eal 
                                                         &  =   & O\left(\frac{1}{n^6}\right).\label{eq:tsetbound}
\eea
This completes our bounds on the ``unexpected events''. We next use these inequalities to bound 
$\p{S_n=\sqrt{n},\eventn{Pos}_n}$. Roughly speaking, in order that $\{S_n=\sqrt{n}\}$ and $\eventn{Pos}_n$ 
occur, it is necessary that 
\begin{itemize}
\item[(a)] $S$ stays positive until time $T$,
\item[(b)] The random walk $S'$ given by $S'_i=\sum_{j=1}^i X_{T_j}/\sqrt{n}$ does not go ``too negative'' 
and additionally $|S'_R|$ is not ``too large'', and 
\item[(c)] $S_n-S_T-\sqrt{n}S'_R = \sqrt n-S_t-\sqrt{n}S_R'$. 
\end{itemize}
Though the event in (c) is precisely the event that $S_n=\sqrt{n}$, we write it in this form in order to point out that once we have conditioned on {\em fixed values} for $T$ and $\st$, $S_n-S_T-\sqrt{n}S'_R$ is independent of $\sqrt{n}-S_T-\sqrt{n}S'_R$. 
We now turn to the details of defining and bounding the events in (a)-(c). 

First, recall that $t^* = 5n^{3/4}\log n$. For any $t \leq t^*$, by Lemma \ref{lem:easy1}, 
\bea
\p{\eventn{Pos}_T,T=t}  &\leq & \p{\eventn{Pos}_{t-1},T=t} \eal
						& \leq & \p{\eventn{Pos}_{t-1},|X_t|=\sqrt{n}} \eal
                                                & =   & \p{\eventn{Pos}_{t-1}}\p{|X_t|=\sqrt{n}} \eal
                                                & =   & O\left(\frac{1}{\sqrt{t}}\right)\cdot O\left(\frac{1}{n^{3/4}}\right) = O\left(\frac{1}{\sqrt{t}n^{3/4}}\right)\label{eq:posttbound}
\eea
For {\em any} events $E,F$, and $G$, $\p{E} \leq \p{E,F}+\p{\bar{F}}$, and $\p{E,F} \leq \p{E,F,G}+\p{F,\bar{G}}$. 
We now apply these bounds together with the bounds (\ref{eq:bbound}), (\ref{eq:tgrowbound}), and (\ref{eq:stbbound_2}), to see that for any event $E$,  
\bea
\p{E,\eventn{Pos}_T} & \leq & \p{E,\eventn{Pos}_T,B} + o(2^{-n/2}) \eal
                                         & \leq & \p{E,\eventn{Pos}_T,B,T\leq t^*} + O(n^{-5}) \eal
                                         & \leq & \p{E,\eventn{Pos}_T,B,T\leq t^*,|S_{T-1}|\leq 8k\sqrt{t^*\log n}} \eal
                                         &              & \quad + \p{B,T\leq t^*,|S_{T-1}|> 8k\sqrt{t^*\log n}} + O(n^{-5}) \eal
                                         & =    & \p{E,\eventn{Pos}_T,B,T\leq t^*,|S_{T-1}|\leq 8k\sqrt{t^*\log n}} + O(n^{-5})\label{eq:epost_1}
\eea
Continuing in this fashion using (\ref{eq:tsetbound}), (\ref{eq:epost_1}), and the fact that $8k\sqrt{t^*\log n} \leq 20 n^{3/8}\log^{3/2}n$, 
and letting $j^*=20 n^{3/8}\log^{3/2}n$, we have 
\bea
\p{E,\eventn{Pos}_T} & \leq & \p{E,\eventn{Pos}_T,B,T\leq t^*,|S_{T-1}|\leq j^*,\st \in \zf} \eal
                                         &              & \quad + \p{\st \notin \zf,T \leq t^*} + O(n^{-5}) \eal
                                         & =    & \p{E,\eventn{Pos}_T,B,T\leq t^*,|S_{T-1}|\leq j^*,\st \in \zf} + O(n^{-5}).\eal
                                         & =    & \sum_{t=1}^{t^*}\p{E,\eventn{Pos}_T,B,T= t^*,|S_{T-1}|\leq j^*,\st \in \zf} + O(n^{-5}).\label{eq:epost_2}
\eea
By applying Bayes' formula and (\ref{eq:posttbound}), this yields 
\bea
\p{E,\eventn{Pos}_T} & \leq & \sum_{t=1}^{t^*}\p{\eventn{Pos}_t,T=t}\p{E\vert\eventn{Pos}_t,T=t,B,|S_{t-1}|\leq j^*,\st \in \zf} + O(n^{-5}) \eal
                                         &  =   & \sum_{t=1}^{t^*}O\left(\frac{1}{\sqrt{t}n^{3/4}}\right)\p{E\vert\eventn{Pos}_t,T=t,B,|S_{t-1}|\leq j^*,\st \in \zf_t} + O(n^{-5}) \label{eq:epost_3}
\eea
Next, for any $1 \leq t \leq t^*$ we have 
\[
\p{E\vert\eventn{Pos}_t,T=t,B,|S_{t-1}|\leq j^*,\st \in \zf_t} \leq \sup_{|s| \leq j^*,\ssi \in \zf_t} \p{E\vert\eventn{Pos}_t,T=t,B,S_{t-1}=s,\st =\ssi},\]
which together with (\ref{eq:epost_3}) gives 
\bea
\p{E,\eventn{Pos}_T} & \leq & \sum_{t=1}^{t^*} O\left(\frac{1}{\sqrt{t}n^{3/4}}\right) \sup_{|s| \leq j^*,\ssi \in \zf_t} \p{E\vert\eventn{Pos}_t,T=t,B,S_{t-1}=s,\st =\ssi} + O(n^{-5}) \eal
                                         & =    & O\left(\frac{\sqrt{t^*}}{n^{3/4}}\right)\sup_{1 \leq t \leq t^*}\sup_{|s| \leq j^*,\ssi \in \zf_t} \p{E\vert\eventn{Pos}_t,T=t,B,S_{t-1}=s,\st =\ssi} \eal
                                         &      & \qquad\qquad\qquad\qquad\qquad\qquad\qquad\qquad+ O(n^{-5}) \eal
                                         &  =   & O\left(\frac{\sqrt{\log n}}{n^{3/8}}\right)\sup_{1 \leq t \leq t^*}\sup_{|s| \leq j^*,\ssi \in \zf_t} \p{E\vert\eventn{Pos}_t,T=t,B,S_{t-1}=s,\st =\ssi}+O(n^{-5}) \eal\label{eq:epost_4}
\eea
We will apply equation (\ref{eq:epost_4}) with $E$ the event $\{S_n=n\}\cap \eventn{Pos}_n$. 
We first note that for a given $t$ with $1 \leq t \leq t^*$, if $|X_t|=\sqrt{n}$ and $|S_{t-1}|=s \leq j^* < \sqrt{n}$, then 
for $\eventn{Pos}_t$ to occur necessarily $X_t=\sqrt{n}$, so $S_t=\sqrt{n}+s$. 

Fix any integer $t$ with $1 \leq t \leq t^*$, any integer $s$ for which $|s|\leq j^*$, and any $\ssi \in \zf_t$. 
We hereafter denote by $\eventn{Good}$ the intersection of events 
\[\eventn{Pos}_t \cap \{T=t\} \cap B \cap \{S_{t-1}=s\} \cap \{\st =\ssi\},\]
and by $\psup{c}{\cdot}$ the conditional probability measure 
\[\p{\cdot \vert\eventn{Good}}.\] 
Given that $\{T_1,\ldots,T_R\}=\ssi$, $R$ is deterministic -- say $R=r$ -- and $n^{1/4}/3 \leq r \leq 3n^{1/4}/2$. We recall that $S'$ was the random walk with $S_i'=\sum_{j=1}^i X_{T_j}/\sqrt{n}$. 
As previously discussed, given that $\eventn{Good}$ occurs, $S_t=\sqrt{n}+s$. 
In order that $\{S_n=\sqrt{n}\}$ and $\eventn{Pos}_n$ occur, then, it is necessary that either 
\begin{enumerate}
        \item for some integer $m$ with $|m| \leq 10\log^2 n$, $S_r'=m$,  
        $S_j' \geq -10\log^2 n$ for all $1 \leq j \leq r$, and $S_n-S_t-\sqrt{n}S'_r=-s-m\sqrt{n}$ 
        (we denote these events $B_m$ for $|m| \leq 10\log^2 n$), or 
        \item there is $j$ with $t < j \leq n$ for which $|\sum_{i=1}^{k-1}S_{j,i}| \geq 10\sqrt{n}\log^2 n$ (we denote this 
        event $C$). 
\end{enumerate}
To see this, observe that if 
none of the events $B_m$ occurs and $C$ does not occur, then either: 
\begin{itemize}
\item $|S_r'|> 10 \log^2 n$, in which case 
\[S_n \geq S_r'\sqrt{n}-|\sum_{i=1}^{k-1}S_{n,i}| \geq S_r'-10\sqrt{n}\log^2 n> \sqrt{n},\]
so $S_n\neq \sqrt{n}$, or
\item there is $j$ with $1 \leq j \leq r$ for which $S_j' < -10\log^2 n$, in which case 
\[S_{T_j} \leq S_j'\sqrt{n}+|\sum_{i=1}^{k-1}S_{T_j,i}| \leq S_j'\sqrt{n}+10\sqrt{n}\log^2 n< 0,\]
so $\eventn{Pos}_n$ does not occur, or
\item $S_r'=m$ for some $m$ with $|m|\leq 10\log^2 n$, but $S_n-S_t-\sqrt{n}+s\neq -s-m\sqrt{n}$,
so $S_n\neq \sqrt{n}$. 
\end{itemize}
Thus, by a union bound, 
\begin{equation}\label{eq:pcond_spos}
        \psup{c}{S_n=\sqrt{n},\eventn{Pos}_n} \leq \psup{c}{\left(\bigcup_{|m| \leq 10\log^2 n} B_m\right)\cup C} \leq \psup{c}{C} + \sum_{m=-10\log^2 n}^{10\log^2 n} \psup{c}{B_m}.
\end{equation}
To bound the probabilities $\psup{c}{B_m}$, we first note that, denoting 
by $A_m$ the event that $S_r'=m$ and 
$S_j' \geq -10\log^2 n$ for all $1 \leq j \leq r$, 
\[B_m = A_m\cap\{S_n-S_t-\sqrt{n}S'_r=\sqrt{n}-s-m\sqrt{n}\}.\] 
Furthermore, $A_m$ and $\{S_n-S_t-\sqrt{n}S'_r=\sqrt{n}-s-m\sqrt{n}\}$ are 
independent as they are determined by disjoint sections of the random walk, so for all $m$ with $|m| \leq 10\log^2 n$, 
\bea
\psup{c}{B_m} & = & \psup{c}{A_m,S_n-S_t-\sqrt{n}S'_r=\sqrt{n}-s-m\sqrt{n}} \eal
                          & = & \psup{c}{A_m}\psup{c}{S_n-S_t-\sqrt{n}S'_r=\sqrt{n}-s-m\sqrt{n}}\label{eq:pcond_bj}
\eea
Now, given that $\eventn{Good}$ occurs, $S'$ is nothing but a symmetric simple random walk of length $r$; thus, by Bertrand's ballot theorem, 
\begin{equation}\label{eq:pcond_aj}
\psup{c}{A_m} = O\left(\frac{(m+10\log^2 n + 1)(10\log^2 n+1)}{r^{3/2}}\right) = O\left(\frac{\log^4 n}{n^{3/8}}\right).
\end{equation}
Also, given that $\eventn{Good}$ occurs, $S_n-S_t-\sqrt{n}S_r'$ is a sum of $n-t-r=\Omega(n)$ i.i.d.~integer-valued 
random variables that are never zero; Thus, by Theorem \ref{thm:spread}, 
\begin{equation}\label{eq:pcond_ntr}
\psup{c}{S_n-S_t-\sqrt{n}S'_r = \sqrt{n}-s-m\sqrt{n}} = O\left(\frac{1}{\sqrt{n}}\right), 
\end{equation}
and combining (\ref{eq:pcond_bj}), (\ref{eq:pcond_aj}), and (\ref{eq:pcond_ntr}) yields that 
\begin{equation}\label{eq:pcond_bj2}
        \psup{c}{B_m} = O\left(\frac{\log^4 n}{n^{7/8}}\right).
\end{equation}
For any $j$ with $t \leq j \leq n$, an argument just as that leading to 
(\ref{eq:ns_bound}) shows that 
\[\psup{c}{|\sum_{i=1}^{k-1}S_{j,i}| \geq 10\sqrt{n}\log^2 n} = O\left(\frac{1}{n^6}\right),\]
so
\begin{equation}\label{eq:pcond_c}
        \psup{c}{C} \leq \sum_{j=t}^n \psup{c}{|\sum_{i=1}^{k-1}S_{j,i}| \geq 10\sqrt{n}\log^2 n} = O\left(\frac{1}{n^5}\right),
\end{equation}
and (\ref{eq:pcond_spos}), (\ref{eq:pcond_bj2}), and (\ref{eq:pcond_c}) together yield 
\begin{equation}\label{eq:pcond_spos2}
        \psup{c}{S_n=\sqrt{n},\eventn{Pos}_n} = O\left(\frac{1}{n^5}\right)+\sum_{m=-10\log^2n}^{10\log^2n}O\left(\frac{\log^4 n}{n^{7/8}}\right) = O\left(\frac{\log^6 n}{n^{7/8}}\right).
\end{equation}
Since $t,s$, and $\ssi\in \zf_t$ were arbitrary, (\ref{eq:epost_4}) and (\ref{eq:pcond_spos2}) combine to give 
\[\p{S_n=\sqrt{n},\eventn{Pos}_n,\eventn{Pos}_T}=O\left(\frac{\sqrt{\log n}}{n^{3/8}}\right)\cdot O\left(\frac{\log^6 n}{n^{7/8}}\right)=O\left(\frac{\log^{13/2} n}{n^{5/4}}\right).\]
Finally, since if $\eventn{Pos}_n$ occurs then either $\eventn{Pos}_T$ occurs or $T>n$, by (\ref{eq:tgrowbound}) we have
\bea
\p{S_n=\sqrt{n},\eventn{Pos}_n} & \geq & \p{S_n=\sqrt{n},\eventn{Pos}_n,\eventn{Pos}_T} - \p{T > n} \eal
                                                                &  =   & O\left(\frac{\log^{13/2} n}{n^{5/4}}\right) - O\left(\frac{1}{n^5}\right) = O\left(\frac{\log^{13/2} n}{n^{5/4}}\right), \nonumber
\eea
as asserted in (\ref{eq:ce2_snpos}).

\section{Conclusion}\label{sec:conclusion}
The results of this paper raise several questions. 
While our examples show that Theorem \ref{thm:gbt_new} is essentially best possible, 
is it perhaps possible that a ballot theorem holds for real-valued Markov chains with finite variance? 
Also, one observation about the example of Section \ref{sec:other} 
is that in that example, the step size $X$ is not in the domain of attraction of 
{\em any} distribution. This leaves open the possibility that a ballot-style 
theorem may hold if the $X$ has mean zero and is in the domain of attraction 
of some stable law. Such behavior seems unlikely but is not ruled out by our examples. 

Finally, it would be very interesting to derive 
conditions on more general multisets $\ssf$ of $n$ numbers summing to some 
value $k$ which guarantee that, in a uniformly random permutation of $\ssf$, 
all partial sums are positive with probability of order $k/n$. 
Indeed, perhaps such work could end up not only generalizing the work of this paper, 
but perhaps unifying it with the existing discrete-time ballot theorems based 
on the ``increasing stochastic process'' perspective.

\bibliographystyle{plainnat}
\bibliography{bib_ballot}

\begin{thebibliography}{20}
\providecommand{\natexlab}[1]{#1}
\providecommand{\url}[1]{\texttt{#1}}
\expandafter\ifx\csname urlstyle\endcsname\relax
  \providecommand{\doi}[1]{doi: #1}\else
  \providecommand{\doi}{doi: \begingroup \urlstyle{rm}\Url}\fi

\bibitem[Andersen(1953)]{andersen53fluctuations}
Erik~Sparre Andersen.
\newblock Fluctuations of sums of random variables.
\newblock \emph{Mathematica Scandinavica}, 1:\penalty0 263--285, 1953.

\bibitem[Andersen(1954)]{andersen54fluctuations}
Erik~Sparre Andersen.
\newblock Fluctuations of sums of random variables ii.
\newblock \emph{Mathematica Scandinavica}, 2:\penalty0 195--223, 1954.

\bibitem[Bertrand(1887)]{bertrand1887solution}
J.~Bertrand.
\newblock Solution d'un probleme.
\newblock \emph{Comptes Rendus de l'Academie des Sciences}, 105:\penalty0 369,
  1887.

\bibitem[Chernoff(1952)]{chernoff52measure}
H.~Chernoff.
\newblock A measure of the asymptotic efficiency for tests of a hypothesis
  based on the sum of observables.
\newblock \emph{Ann. Math. Statist.}, 2:\penalty0 493--509, 1952.

\bibitem[Feller(1968{\natexlab{a}})]{feller68intro1}
William Feller.
\newblock \emph{An Introduction to Probability Theory and Its Applications,
  Volume 1}, volume~1.
\newblock John Wiley \& Sons, Inc, third edition, 1968{\natexlab{a}}.

\bibitem[Feller(1968{\natexlab{b}})]{feller68intro2}
William Feller.
\newblock \emph{An Introduction to Probability Theory and Its Applications,
  Volume 2}.
\newblock John Wiley \& Sons, Inc, third edition, 1968{\natexlab{b}}.

\bibitem[Janson et~al.(2000)Janson, {\L}uczak, and Ruci\'nski]{janson00random}
S.~Janson, T.~{\L}uczak, and A.~Ruci\'nski.
\newblock \emph{Random Graphs}.
\newblock Wiley, New York, 2000.

\bibitem[Kallenberg(2003)]{kallenberg03foundations}
Olav Kallenberg.
\newblock \emph{Foundations of Modern Probability}.
\newblock Probability and Its Applications. Springer Verlag, second edition,
  2003.

\bibitem[Kallenberg(1999)]{kallenberg99ballot}
Olav Kallenberg.
\newblock Ballot theorems and sojourn laws for stationary processes.
\newblock \emph{The Annals of Probability}, 27\penalty0 (4):\penalty0
  2011--2019, 1999.

\bibitem[Kesten(1972)]{kesten72spread}
Harry Kesten.
\newblock Sums of independent random variables--without moment conditions.
\newblock \emph{Annals of Mathematical Statistics}, 43\penalty0 (3):\penalty0
  701--732, June 1972.

\bibitem[Pemantle and Peres(1995)]{pemantle95critical}
Robin Pemantle and Yuval Peres.
\newblock Critical random walk in random environment on trees.
\newblock \emph{The Annals of Probability}, 23\penalty0 (1):\penalty0 105--140,
  1995.

\bibitem[Spitzer(1956)]{spitzer56combinatorial}
Frank Spitzer.
\newblock A combinatorial lemma and its applications to probability theory.
\newblock \emph{Transactions of the American Mathematical Society}, 82\penalty0
  (2):\penalty0 323--339, July 1956.

\bibitem[Stone(1965)]{stone65local}
Charles~J. Stone.
\newblock On local and ratio limit theorems.
\newblock In \emph{Proceedings of the Fifth Berkeley Symposium on Mathematical
  Statistics and Probability}, pages 217--224, 1965.

\bibitem[Tak{\'a}cs(1962{\natexlab{a}})]{takacs62ballot}
Lajos Tak{\'a}cs.
\newblock Ballot problems.
\newblock \emph{Zeitschrift f{\"u}r Warscheinlichkeitstheorie und verwandte
  Gebeite}, 1:\penalty0 154--158, 1962{\natexlab{a}}.

\bibitem[Tak{\'a}cs(1962{\natexlab{b}})]{takacs62generalization}
Lajos Tak{\'a}cs.
\newblock A generalization of the ballot problem and its application in the
  theory of queues.
\newblock \emph{Journal of the American Statistical Association}, 57\penalty0
  (298):\penalty0 327--337, 1962{\natexlab{b}}.

\bibitem[Tak{\'a}cs(1962{\natexlab{c}})]{takacs62time}
Lajos Tak{\'a}cs.
\newblock The time dependence of a single-server queue with poisson input and
  general service times.
\newblock \emph{The Annals of Mathematical Statistics}, 33\penalty0
  (4):\penalty0 1340--1348, December 1962{\natexlab{c}}.

\bibitem[Tak{\'a}cs(1963)]{takacs63distribution}
Lajos Tak{\'a}cs.
\newblock The distribution of majority times in a ballot.
\newblock \emph{Zeitschrift f{\"u}r Warscheinlichkeitstheorie und verwandte
  Gebeite}, 2\penalty0 (2):\penalty0 118--121, January 1963.

\bibitem[Tak{\'a}cs(1964{\natexlab{a}})]{takacs64combinatorial}
Lajos Tak{\'a}cs.
\newblock Combinatorial methods in the theory of dams.
\newblock \emph{Journal of Applied Probability}, 1\penalty0 (1):\penalty0
  69--76, 1964{\natexlab{a}}.

\bibitem[Tak{\'a}cs(1964{\natexlab{b}})]{takacs64fluctuations}
Lajos Tak{\'a}cs.
\newblock Fluctuations in the ratio of scores in counting a ballot.
\newblock \emph{Journal of Applied Probability}, 1\penalty0 (2):\penalty0
  393--396, 1964{\natexlab{b}}.

\bibitem[Tak{\'a}cs(1967)]{takacs67comb}
Lajos Tak{\'a}cs.
\newblock \emph{Combinatorial Methods in the Theory of Stochastic Processes}.
\newblock John Wiley \& Sons, Inc, New York, NY, first edition, 1967.

\end{thebibliography}

\end{document}